\newcommand*\bigcdot{\mathpalette\bigcdot@{1}}
\newcommand*\bigcdot@[2]{\mathbin{\vcenter{\hbox{\scalebox{#2}{$\m@th#1\bullet$}}}}}
\newcommand*{\rom}[1]{\expandafter\@slowromancap\romannumeral #1@}
\numberwithin{equation}{section}
\author{
 \scshape Lian Haeming \\
  \textit{Queen Mary University of London}
}
\title{\scshape \bfseries \Large quasiballistic transport for discrete  one-dimensional quasiperiodic  schr\"odinger operators }
\date{}
\newcommand\restr[2]{{
  \left.\kern-\nulldelimiterspace 
  #1 
  \vphantom{\big|} 
  \right|_{#2} 
  }}
\newcommand\R{\mathbb{R}}
\newcommand\C{\mathbb{C}}
\newcommand\Z{\mathbb{Z}}
\newcommand\N{\mathbb{N}}
\newcommand\Q{\mathbb{Q}}
\newcommand\T{\mathbb{T}}
\newcommand{\newreptheorem}[2]{\newtheorem*{rep@#1}{\rep@title}\newenvironment{rep#1}[1]{\def\rep@title{#2 \ref*{##1}}\begin{rep@#1}}{\end{rep@#1}}}
\def\restrict#1{\raise-.5ex\hbox{\ensuremath|}_{#1}}
\newtheorem{thm}{\normalfont\bfseries Theorem}
\newtheorem{lemma}[thm]{\normalfont\bfseries  Lemma}
\newtheorem{claim}[thm]{\normalfont\bfseries Claim}
\newtheorem{rmk}[thm]{\normalfont\bfseries Remark}
\newtheorem{prop}[thm]{\normalfont\bfseries Proposition}
\let\hdrtitle\@title
\titlespacing*{\section}{0pt}{4ex}{1.5ex}
\titleformat{\section}[block]{\color{black}\scshape\filcenter}{\thesection.}{0.5em}{}
\begin{document}
\renewcommand{\abstractname}{\vspace{-\baselineskip}}
\maketitle

\begin{abstract}
We obtain (up to logarithmic scaling) the power-law lower bound $M_{p}(T_{k})\gtrsim T_{k}^{(1-\delta)p}$ on a subsequence $T_{k}\rightarrow\infty$,  uniformly across $p>0$, for discrete one-dimensional quasiperiodic Schr\"odinger operators with frequencies satisfying $\beta(\alpha)>\frac{3}{\delta}\min_{\sigma}\gamma$. We achieve this by obtaining a quantitative ballistic lower bound for the Abel-averaged time evolution of general periodic Schr\"odinger operators in terms of the bandwidths. A similar result without uniformity, which assumes $\beta(\alpha)>\frac{C}{\delta}\min_{\sigma}\gamma$, was obtained earlier by Jitomirskaya and Zhang, for an implicit constant $C<\infty$. 
\end{abstract}

\section*{Introduction}
Let $\T = \R/\Z$ be the unit circle. To each phase $\theta\in \T$, frequency $\alpha\in\R$ and Lipschitz sampling function $f:\T\rightarrow\R$ we associate a discrete  Schr\"odinger operator $H_{\alpha,\theta}:\ell^2(\mathbb{Z}) \rightarrow \ell^2(\mathbb{Z})$, where
\begin{equation}\label{dyn:01}
(H_{\alpha,\theta}\psi)(n)=\psi(n-1)+\psi(n+1)+f(\theta+n\alpha)\psi(n).
\end{equation}

We are interested in the rate of spreading of the solution $\psi_{t}=e^{-itH_{\alpha,\theta}}\delta_{0}$ of the Schr\"odinger equation $\partial_{t}\psi_{t}=-iH_{\alpha,\theta}\psi_{t}$ with initial condition given by the canonical vector $\psi_{0}=\delta_{0}=(\dots,0,1,0,\dots)$.  One way to quantify the rate of spreading is through the Abel-averaged moments of the position operator, 
\begin{equation}\label{dyn:02}
M_{\alpha,\theta,p}(T) =\frac{2}{T}\int_{0}^{\infty}e^{-\frac{2t}{T}}\sum_{n\in\Z} |n|^{p}|\langle \delta_{n}, e^{-itH_{\alpha,\theta}}\delta_{0}\rangle|^{2}\,dt
\end{equation} for $p>0$. The ballistic bound (see \eqref{dyn:ballistic}) implies $M_{\alpha,\theta,p}(T)\leq C(T^{p}+1)$ for any $T>0$ and $p>0$.  Namely,  the average distance from the particle to the origin $\approx M_{\alpha,\theta,p}^{1/p}$ grows no faster than linearly in time.  

Singular continuous spectra is encountered frequently in the quasiperiodic setting, even for basic models such as the almost Mathieu operator (AMO) with cosine sampling  $f(\theta)=2\lambda\cos(2\pi\theta)$, $\lambda>0$. The direct consequences of singular continuous spectra on the dynamics is not well understood, but such models can manifest surprising dynamical behaviour such as being almost ballistic on some time-scales while almost localised on others.  

The relationship between the arithmetic properties of the frequency (i.e.\ how well approximable it is by rationals) and the Lyapunov exponent \eqref{dyn:lyapunov} determines where the spectral measure is continuous. In the case of Liouville frequencies, Jitomirskaya \cite{Ji94} introduced the rate of exponential growth of the denominators of the canonical continued fraction approximants ${p_{m}}/{q_{m}}$  of the frequency $\alpha\in\R\setminus\Q$,
$$
\beta(\alpha)=\limsup_{m\rightarrow\infty} \frac{\log q_{m+1}}{q_{m}}.
$$
It is known (see below) that the spectrum is continuous (and hence singular continuous if the Lyapunov exponent is positive  $\min_{\R}\gamma>0$) in the region where $\beta(\alpha)$ is greater than the Lyapunov exponent
\begin{equation}\label{dyn:lyapunov}
\gamma(E) = \lim_{n\rightarrow\infty}  \frac{1}{n}\int_{\T}\log\|\Phi_{\alpha,\theta,[0,n-1]}(E)\|\,d\theta
\end{equation} for $\alpha\in\R\setminus\Q$, where 
$\Phi_{\alpha,\theta,[0,n-1]}(E)\in\text{SL}_{2}(\R)$ is the $n$-step transfer matrix  \eqref{dyn:36} associated with the operator $H_{\alpha,\theta}$. See for example the lecture notes of Viana \cite{Vi14} for the existence of the limit \eqref{dyn:lyapunov}.

The usual approach to obtain dynamical lower bounds is by obtaining suitable continuity properties of the spectral measure. One example is the Guarneri -- Combes -- Last bound \cite[Thm 6.1]{La96} which shows (roughly speaking) that the moments grow polynomially with order given by the upper Hausdorff dimension of the spectral measure. The construction of Last \cite[Thm 7.2]{La96}  shows that the supercritical ($\lambda>1$) AMO with certain extremely Liouville  frequencies (i.e.\ with $\beta(\alpha)=\infty$) has purely zero Hausdorff dimensional spectrum yet the transport is almost ballistic on some time-scales.   Zero Hausdorff dimensional spectrum is a phenomena not limited to the supercritical AMO. A theorem of Simon \cite{Si07} states that the support of the spectral measure of an ergodic Schr\"odinger operator with positive Lyapunov exponent \eqref{dyn:lyapunov} is of zero logarithmic capacity and therefore of zero Hausdorff dimension. The dynamical behaviour associated with quasiperiodic operators with positive Lyapunov exponent therefore require a more nuanced description.

In the regime of positive Lyapunov exponent, without any assumptions on  the arithmetic properties of the frequency $\alpha\in\R\setminus\Q$, the moments obey a sub-power-law bound on an unbounded \emph{subsequence} of times. If however, the frequency satisfies $\beta(\alpha)=0$ then the moments obey a sub-power-law bound at \emph{all} times. Both of these facts were established for trigonometric polynomials by Damanik and Tcheremchantsev \cite{DaTc07}. Combined with Last's example, these results show that the transport of the AMO is essentially localised (at all times) for frequencies satisfying $\beta(\alpha)=0$, while for other frequencies with $\beta(\alpha)=\infty$, its transport is essentially ballistic on some time-scales (but simultaneously, essentially localised on others).  This suggests a relationship between the size of $\beta(\alpha)$ and the growth of the moments on such time-scales.

\begin{thm}\label{dyn:theorem}
Let $H_{\alpha,\theta}$ be as in \eqref{dyn:01} with associated continuous Lyapunov exponent $\gamma$. Let $\sigma_{\alpha}$ denote the deterministic spectrum of $H_{\alpha,\theta}$. There exists $c>0$ such that for any $0<\delta<\frac{1}{2}$, if $\beta(\alpha)>\frac{3}{\delta}\min_{\sigma_{\alpha}}\gamma$ then there exists a sequence $T_{k}\rightarrow\infty$ such that for each $k\geq1$ we have 
\begin{equation}\label{dyn:02.5}
\min_{\theta\in\T} M_{\alpha,\theta,p}(T_{k})>\frac{cT_{k}^{(1-\delta)p}}{\log^{10} T_{k}}
\end{equation} for every $p>0$. 
\end{thm}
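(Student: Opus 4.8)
The plan is to exploit the Liouville structure of $\alpha$ via its continued fraction approximants $p_m/q_m$. For each $m$, the rational $p_m/q_m$ is extremely close to $\alpha$ (precisely $|\alpha - p_m/q_m| \leq 1/(q_m q_{m+1})$), so on a spatial scale up to roughly $q_{m+1}$ the operator $H_{\alpha,\theta}$ is well approximated by the periodic operator $H_{p_m/q_m,\theta}$, which has period $q_m$. The first step is to make this approximation quantitative: if two Schrödinger operators agree on $[-L,L]$ up to an error $\varepsilon$ in the potential, then their time evolutions $e^{-itH}\delta_0$ stay close for times $t \lesssim \min(L, 1/(L\varepsilon))$ by a Duhamel/finite-propagation-speed estimate (the ballistic bound controls leakage out of $[-L,L]$). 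Choosing $L \approx q_{m+1}$ and $\varepsilon \approx q_m/q_{m+1}$ (the Lipschitz bound on $f$ times the phase drift $L|\alpha-p_m/q_m|$) gives faithful approximation for times up to $T_k \approx q_{m+1}$, say, or a suitable fractional power thereof dictated by $\delta$.

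The second and central step is the ballistic lower bound for the periodic operator promised in the abstract: for $H_{p_m/q_m,\theta}$ one wants $M_{p_m/q_m,\theta,p}(T) \gtrsim T^p$ (up to logs and the $\theta$-minimum) on the relevant time window, with the implicit constant controlled in terms of the bandwidths of the periodic spectrum. This is where the hypothesis $\beta(\alpha) > \frac{3}{\delta}\min_\sigma \gamma$ enters: by a Thouless-type formula relating the Lyapunov exponent to the bandwidths, the total bandwidth of $H_{p_m/q_m}$ at scale $q_m$ is $\gtrsim e^{-q_m \gamma}$ up to polynomial factors, and $\beta(\alpha) > \frac{3}{\delta}\gamma$ forces $q_{m+1} \gtrsim e^{q_m \beta}$, so $q_{m+1}^{1-\delta}$ still beats the inverse bandwidth by a margin. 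One then transfers the periodic ballistic bound (via group velocity / Bloch wave analysis: nonvanishing derivative of band functions on a set of positive measure, weighted by bandwidth) into the lower bound $M_{p_m/q_m,\theta,p}(T_k) \gtrsim T_k^{(1-\delta)p}$ on the window, uniformly in $p$ because the estimate is really a lower bound on the probability of finding the particle at distance $\gtrsim T_k^{1-\delta}$, which upgrades to every moment simultaneously.

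The third step glues the two together: combine the faithful-approximation estimate with the periodic lower bound to conclude $\min_\theta M_{\alpha,\theta,p}(T_k) \gtrsim T_k^{(1-\delta)p}/\log^{10}T_k$ along the subsequence $T_k = q_{m_k+1}^{1-\delta'}$ (or whatever the bookkeeping dictates), for $m$ ranging over a subsequence realizing the $\limsup$ in $\beta(\alpha)$. One must check that the error from the approximation step, when fed through $|n|^p$ and the Abel average, is dominated by the main term for all $p>0$ simultaneously; again this is cleanest phrased at the level of a tail probability bound $\sum_{|n| > T_k^{1-\delta}} |\langle \delta_n, \psi_t\rangle|^2 \gtrsim (\log T_k)^{-c}$ for a positive-density set of $t \lesssim T_k$, then Abel-averaging.

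I expect the main obstacle to be the quantitative ballistic lower bound for periodic operators in terms of bandwidths, and in particular making the constants explicit enough that the factor $3$ in $\beta(\alpha) > \frac{3}{\delta}\min_\sigma\gamma$ (rather than an unspecified large $C$) comes out — this requires a sharp, rather than soft, analysis of the band functions and their derivatives, presumably controlling how flat a band can be relative to its width, and tracking the three sources of loss (approximation time window, bandwidth-to-group-velocity conversion, and the $\log^{10}$ slack) so that they assemble into exactly the claimed exponent $3/\delta$. The uniformity in $p$ is comparatively cheap once the argument is organized around a tail estimate rather than moment-by-moment.
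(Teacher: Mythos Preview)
Your overall architecture matches the paper's: approximate $H_{\alpha,\theta}$ by the periodic $H_{p_m/q_m,\theta}$ via Duhamel plus finite propagation speed, establish a quantitative ballistic lower bound for the periodic operator in terms of a bandwidth, and transfer it back along a subsequence realising $\beta(\alpha)$. Two points, however, need correction.

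First, your account of where the constant $3$ comes from is off. It is not an accumulation of ``three sources of loss''; it is a single algebraic balance. The periodic lower bound the paper proves (its Lemma on $P_{q,T}(nq)$) gives, for each admissible $n$, a probability of size $\sim 1/(q^{6}\ell_{j}T)$. The Duhamel comparison, after truncating to a box of size $N\sim t$ and using $\|e^{-itA}-e^{-itB}\|\le t\|A-B\|$ with $\|A-B\|\lesssim N e^{-\beta q_{m}}$, yields an error $\sim t^{2}e^{-\beta q_{m}}$, which Abel-averages to $\sim T^{2}e^{-\beta q_{m}}$. Requiring this error to be smaller than the main term $\sim 1/T$ forces $T^{3}\lesssim e^{\beta q_{m}}$. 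With the choice $T=e^{\delta^{-1}(\gamma_{0}+\varepsilon')q_{m}}$ (needed so that $T\ell_{j}\gtrsim T^{1-\delta}$ via the bandwidth lower bound $\ell_{j}\gtrsim e^{-(\gamma_{0}+\varepsilon'')q_{m}}$), this reads $3\delta^{-1}\gamma_{0}<\beta$. In particular the correct time scale is $T_{k}\approx e^{\beta q_{m_{k}}/3}\approx q_{m_{k}+1}^{1/3}$, not $q_{m_{k}+1}$ or $q_{m_{k}+1}^{1-\delta'}$ as you suggest.

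Second, and more substantively, your sketch of the periodic ballistic bound is missing a genuine ingredient. A bandwidth lower bound alone does not give a lower bound on the transport: one also needs that the relevant band carries nontrivial spectral weight for the initial state, \emph{uniformly} in the Floquet parameter $\varkappa$, the phase $\theta$, and the period $q_{m}$. The paper's periodic lemma has as its sole hypothesis $\inf_{\varkappa}\mu_{\varkappa,q}(I)>\eta$ for the canonical spectral measure of the Floquet matrix on an interval $I$ near the minimiser of $\gamma$, and then a pigeonhole-type argument extracts a single band $B_{q}^{(j)}\cap I\neq\varnothing$ with $|\{\varkappa:\varphi_{j}(\varkappa)>\eta/q\}|>\pi/(2q^{2})$. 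Verifying that hypothesis uniformly is a separate lemma: one shows the Floquet spectral measures converge to $\mu_{\alpha,\theta}$ (whose support is $\sigma_{\alpha}\ni E_{0}$) uniformly on intervals, which in turn requires a Gordon-type argument to ensure $\mu_{\alpha,\theta}$ has no atoms at the interval endpoints. Your ``group velocity on a set of positive measure'' picture is the right intuition for the analysis \emph{after} this hypothesis is in hand, but without the uniform spectral-weight input the argument does not close.
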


The relationship between the size of the exponent $\beta(\alpha)$ and the upper transport exponent has been studied earlier by Jitomirskaya and Zhang \cite[Thm 7]{JiZh21} who showed that for an implicit constant $C<\infty$; for any $0<\delta<1$; if $\beta(\alpha)>\frac{C}{\delta}\min_{\sigma_{\alpha}}\gamma$, then the packing dimension of the spectral measure is at least $1-\delta$ and therefore, so is the upper transport exponent, for each $\theta\in\T$ and $p>0$, since the upper transport exponent is bounded from below by the packing dimension of the spectral measure by the work of Guarneri and Schulz-Baldes \cite{GuSB02}. 

Lower bounds on the upper  transport exponent for models with singular continuous spectrum does not provide a significant physical distinction against models with pure point spectrum in light of the example of del Rio -- Jitomirskaya -- Last -- Simon \cite{dRJiLaSi96} who constructed an operator with upper transport exponent equal to $1$, despite having pure point spectrum. Our lower bound therefore offers a stronger characterisation of particle behaviour than that of \cite{JiZh21}, as it is a lower bound on the moments themselves, uniformly across all positive values of $p$. We require a weaker assumption on $\beta(\alpha)$, but the lower bound of \cite{JiZh21} permits larger values of $\delta$. Our approach, as well as that of \cite{JiZh21} require continuity of the Lyapunov exponent at a  minimum on the spectrum. 
Bourgain -- Jitomirskaya \cite[Thm 1]{BoJi02} show that the Lyapunov exponent associated with quasiperiodic Schr\"odinger operators with real analytic $f$  is jointly continuous in the energy $E\in\R$ and irrational frequency $\alpha\in\R\setminus\Q$. 

Last's example uses the fact that the transport associated with periodic operators is ballistic and then shows that the limit captures ballisticity on some time-scales, since the frequency is Liouville.  We draw inspiration from Last's construction in that we obtain a quantitative ballistic lower bound on the entries of the time evolution of general periodic operators (see Lemma \ref{dyn:periodiclower}), in terms of the bandwidths. We then extend this to the limit on a subsequence of times. The theorem then follows from Proposition \ref{dyn:bandwidths} which lower-bounds the bandwidths in terms of the Lyapunov exponent of the limiting operator.

Absence of pure point spectrum for extremely Liouville frequencies $\beta(\alpha)=\infty$ was established by Gordon \cite{Go76}. Avron and Simon \cite{AvSi82} used Gordon's theorem to show that the supercritical AMO has singular continuous spectrum. In the regime of positive Lyapunov exponent, Kotani \cite{Ko84} and Gordon \cite{Go76} imply singular continuous spectrum for extremely Liouville frequencies. By repeating the arguments of Gordon in the usual way for finite $\beta(\alpha)$, one obtains that the spectral measure is continuous on the set $\{E:\beta(\alpha)>2\gamma(E)\}$ -- the factor of $2$ arising from the fact that one has to approximate the solution along double periods, see the proof of Lemma \ref{dyn:nonatomic}. Avila -- You -- Zhou \cite{AvYoZh15} showed that for $0<\beta(\alpha)<\infty$, the spectrum of the AMO is purely singular continuous for all $\theta\in\T$ if $1\leq|\lambda|<e^{\beta(\alpha)}$ and pure point with exponentially decaying eigenfunctions for a.e $\theta\in\T$, if $|\lambda|>e^{\beta(\alpha)}$.  Jitomirskaya -- Yang \cite{JiYa17} have ruled-out pure point spectrum in the region $\{E:\beta(\alpha)>\gamma(E)\}$, for all meromorphic sampling functions $f$. See also Yang -- Zhang \cite{YaZh19} for an extension of this fact to a larger class of sampling functions. See Jitomirskaya -- Liu \cite{JiLi17} for a complete description of the spectrum of the Maryland model, in terms of the arithmetic properties of the phase and frequency. 

Various upper bounds on the transport have been established since the work of  Damanik -- Tcheremchantsev \cite{DaTc07}. Jitomirskaya -- Mavi \cite{JiMa16} extended their result to piecewise H\"older sampling functions, and subsequently, Han -- Jitomirskaya \cite{HaJi18} extended it to a wider class of ergodic potentials in the multi-frequency setting. These results are limited to transport exponents. Jitomirskaya -- Powell \cite{JiPo22} derived power-logarithmic upper bounds on the transport for any fixed $\theta\in\T$, which was later improved by Jitomirskaya -- Liu  \cite{JiLi21} to long-range operators, and then by Shamis -- Sodin \cite{ShSo23}, followed by Liu \cite{Li23}, to long-range operators in arbitrary dimensions, uniformly across phase $\theta\in\T$.

\section*{Acknowledgements}
It is a pleasure to thank Mira Shamis for proposing this work and Sasha Sodin for his support towards its completion. The author is grateful to the Weizmann Institute of Science for their hospitality during the completion of this work. This work was supported by the EPSRC PhD fellowship and supported in part by an EPSRC research grant (EP/X018814/1) and by a Philip Leverhulme Prize of the Leverhulme Trust (PLP-2020-064).

\section{Time evolution of periodic operators}

Our general strategy for the proof of the theorem is the following. The main ingredient is Lemma \ref{dyn:periodiclower}, which provides a lower bound on the Abel-average \eqref{dyn:10} of the sum of two entries of the time evolution operator $e^{-itH_{q}}$ associated with a general periodic operator
$$
H_{q}:\ell^{2}(\Z)\rightarrow\ell^{2}(\Z), \quad H_{q}=\Delta+V_{q}
$$
where $V_{q}$ is a periodic sequence of period $q\geq1$. Lemma \ref{dyn:03.5} provides the explicit expression describing  the averaged entries in terms of the resolvent operator associated with $H_{q}$, which is to be lower bounded in Lemma \ref{dyn:periodiclower}. The expression \eqref{dyn:11} contains as a factor the canonical  spectral measure \eqref{dyn:12}  associated with the Floquet matrix. The only assumption of Lemma \ref{dyn:periodiclower} is therefore a uniform (over the Floquet number $\varkappa\in[0,\pi/q]$) lower bound on the spectral measure evaluated at an interval. In Section \ref{dyn:secuniflowerbound} we show that this assumption holds (also uniformly in phase and period) for the Floquet matrix associated with the periodic operator $H_{\alpha_{m},\theta}$ for $\alpha_{m}\rightarrow\alpha$ where $\alpha\in\R\setminus\Q$ satisfies the assumptions in the theorem. The proof of the theorem mainly involves showing that a similar lower bound to Lemma \ref{dyn:periodiclower} also holds for the limiting quasiperiodic operator.

It is well known (see the proof of Lemma \ref{dyn:03.5}) that periodic operators $H_{q}$ are unitarily equivalent to a multiplication operator $M_{q} : L^{2}(\T_{q}\mapsto\C^{q})\rightarrow L^{2}(\T_{q}\mapsto\C^{q})$, $\T_{q}= \R/\frac{2\pi}{q}\Z$, which acts as a multiplication by the matrix-valued function 
\begin{equation}\label{dyn:floquet}
A_{q}:\T_{q}\rightarrow\C^{q\times q},\quad A_{q}(\varkappa) = 
\begin{pmatrix}
V_{q}(-\frac{q}{2}) & 1 & & e^{iq\varkappa} \\ 
1 & \ddots &\ddots &  \\
 & \ddots & \ddots & 1 \\
e^{-iq\varkappa}&  & 1 & V_{q}(\frac{q}{2}) \\
\end{pmatrix} 
\end{equation} known as the Floquet matrix.

Let $\lambda_{j}(\varkappa)\in\R$ denote the $j$-th  eigenvalue of $A_{q}(\varkappa)$, counting from the left $\lambda_{j}(\varkappa)\leq \lambda_{j+1}(\varkappa)$. By unitary equivalence, the spectrum of the periodic operator $H_{q}$ is given by the spectrum of the multiplication operator $M_{q}$ which itself is the union of the spectrum of the Floquet matrix over all $\varkappa\in[0,\pi/q]$. The spectrum of $H_{q}$ is the union of $q$ closed intervals (which are called bands) $B_{q}^{(j)}$, 
$$
\sigma(H_{q})=\bigcup_{j=1}^{q}B_{q}^{(j)}, \quad B_{q}^{(j)}=\bigcup_{\varkappa\in [0,\pi/q]}\{\lambda_{j}(\varkappa)\}
$$
with mutually disjoint interiors. By characteristic polynomial \eqref{dyn:06} considerations, the eigenvalues $\lambda_{j}$ are easily seen to be monotonic as functions of $\varkappa\in[0,\pi/q]$, whose derivatives alternate in sign according to parity of $1\leq j\leq q$.  Our lower bound of Lemma \ref{dyn:periodiclower} is given in terms of the bandwidths
 $$
 \ell_{j}=\ell_{j,q}=|B_{q}^{(j)}|=|\lambda_{j}(\pi/q)-\lambda_{j}(0)|.
 $$ 

Most of the effort in this paper goes into estimating the RHS of \eqref{dyn:11},  which is an explicit expression for the Abel-averaged entries \eqref{dyn:10} in terms of the eigen-pairs $(\lambda_{j}(\varkappa),\Psi_{\varkappa}^{(j)})$ of the Floquet matrix \eqref{dyn:floquet}, 
\begin{equation}\label{dyn:10}
P_{q,T}(n)=\frac{2}{T}\int_{0}^{\infty}(|\langle \delta_{n}, e^{-itH_{q}}\delta_{0}\rangle|^{2}+|\langle \delta_{n+1}, e^{-itH_{q}}\delta_{1}\rangle|^{2})e^{-\frac{2t}{T}}\,dt.
\end{equation}

Note that the entries \eqref{dyn:10} differ from \eqref{dyn:02}. The theorem is actually proved for the summed entries \eqref{dyn:10} as opposed to as stated in \eqref{dyn:02}. Summing the two entries $(0,nq)$ and $(1,nq+1)$ allows us to express the quantum probability $P_{q,T}(nq)$ in terms of the canonical spectral measure \eqref{dyn:12}, whose support coincides with the spectrum.

\begin{lemma}\label{dyn:03.5}
Let $H_{q}$ be a periodic Schr\"odinger operator of period $q\geq1$. Let $(\lambda_{j}(\varkappa),\Psi_{\varkappa}^{(j)})$ denote the $j$-th eigenpair of the associated Floquet matrix \eqref{dyn:floquet}. Let $P_{q,T}$ be the quantity defined in \eqref{dyn:10}. For any $n\in\Z$,
\begin{equation}\label{dyn:11}
P_{q,T}(nq)=\frac{1}{\pi T}\int_{\R}\bigg|\sum_{j=1}^{q}\int_{0}^{\frac{\pi}{q}} \frac{\cos(nq\varkappa)\varphi_{j}(\varkappa)}{\lambda_{j}(\varkappa)-E-iT^{-1}} \frac{d\varkappa}{\frac{\pi}{q}}\bigg|^{2}\,dE
\end{equation}
where $\varphi_{j}(\varkappa)=|\langle \Psi_{\varkappa}^{(j)},e_{0}\rangle|^{2}+|\langle \Psi_{\varkappa}^{(j)},e_{1}\rangle|^{2}$ and $e_{k}\in\C^{q}$ is the $k$-th canonical basis vector for $\C^{q}$.
\end{lemma}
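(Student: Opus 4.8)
The plan is to diagonalise $H_{q}$ by the Bloch--Floquet transform, expand the fibre evolutions by the spectral theorem, and then turn the Abel--Laplace time average into an energy integral via a Plancherel identity. First I would make the unitary $U:\ell^{2}(\Z)\to L^{2}(\T_{q}\to\C^{q})$ realising $H_{q}\cong M_{q}$ explicit: writing each integer uniquely as $mq+k$ with $k$ in a fixed set of residues mod $q$, one has $(U\psi)(\varkappa)_{k}=\sqrt{q/2\pi}\sum_{m\in\Z}\psi(mq+k)e^{imq\varkappa}$, so that $U\delta_{mq+k}(\varkappa)=\sqrt{q/2\pi}\,e^{imq\varkappa}e_{k}$ and $UH_{q}U^{-1}$ is multiplication by $A_{q}(\varkappa)$. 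Diagonalising each fibre as $e^{-itA_{q}(\varkappa)}=\sum_{j=1}^{q}e^{-it\lambda_{j}(\varkappa)}\Pi_{j}(\varkappa)$, where $\Pi_{j}(\varkappa)=\lvert\Psi^{(j)}_{\varkappa}\rangle\langle\Psi^{(j)}_{\varkappa}\rvert$ is the spectral projection onto $\lambda_{j}(\varkappa)$, and inserting this into $\langle\delta_{nq},e^{-itH_{q}}\delta_{0}\rangle=\langle U\delta_{nq},e^{-itM_{q}}U\delta_{0}\rangle$ gives
\[
\langle\delta_{nq},e^{-itH_{q}}\delta_{0}\rangle=\int_{\T_{q}}e^{-inq\varkappa}\sum_{j=1}^{q}e^{-it\lambda_{j}(\varkappa)}\langle e_{0},\Pi_{j}(\varkappa)e_{0}\rangle\,\frac{q\,d\varkappa}{2\pi},
\]
and likewise $\langle\delta_{nq+1},e^{-itH_{q}}\delta_{1}\rangle$ with $e_{0}$ replaced by $e_{1}$.

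The key structural input is the reflection symmetry: every entry of $A_{q}(\varkappa)$ is real except the corner entries $e^{\pm iq\varkappa}$, so $A_{q}(-\varkappa)=\overline{A_{q}(\varkappa)}$, whence $\lambda_{j}(-\varkappa)=\lambda_{j}(\varkappa)$ and $\Pi_{j}(-\varkappa)=\overline{\Pi_{j}(\varkappa)}$; since $e_{0},e_{1}$ are real vectors, the diagonal quantities $\langle e_{k},\Pi_{j}(\varkappa)e_{k}\rangle$ are even in $\varkappa$. Hence $e^{-inq\varkappa}$ may be replaced by $\cos(nq\varkappa)$ and the integral folded onto $[0,\pi/q]$, producing the normalised measure $\tfrac{d\varkappa}{\pi/q}$. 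Summing the two matrix elements, the fibre weights combine into $\varphi_{j}(\varkappa)=\langle e_{0},\Pi_{j}(\varkappa)e_{0}\rangle+\langle e_{1},\Pi_{j}(\varkappa)e_{1}\rangle=\lvert\langle\Psi^{(j)}_{\varkappa},e_{0}\rangle\rvert^{2}+\lvert\langle\Psi^{(j)}_{\varkappa},e_{1}\rangle\rvert^{2}$, which, being a matrix element of the spectral projection, needs no choice of eigenvector and is well defined and continuous off the at most $2q$ values of $\varkappa$ where adjacent bands touch. Thus the summed amplitude $g(t):=\langle\delta_{nq},e^{-itH_{q}}\delta_{0}\rangle+\langle\delta_{nq+1},e^{-itH_{q}}\delta_{1}\rangle$ equals $\int_{\R}e^{-it\lambda}\,d\nu_{n}(\lambda)$, the Fourier transform of the finite signed measure $\nu_{n}=\sum_{j}(\lambda_{j})_{*}\bigl(\cos(nq\varkappa)\,\varphi_{j}(\varkappa)\,\tfrac{d\varkappa}{\pi/q}\bigr)$, carried by $\sigma(H_{q})$.

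Finally I would invoke the Abel--Plancherel identity: for a finite complex measure $\nu$, the function $t\mapsto e^{-t/T}\1_{t>0}\int e^{-it\lambda}\,d\nu(\lambda)$ is bounded and exponentially damped, hence in $L^{2}(\R)$, and by Fubini its Fourier transform is $-iG_{\nu}(E+iT^{-1})$ with $G_{\nu}(z)=\int\frac{d\nu(\lambda)}{\lambda-z}$, so Plancherel gives $\frac{2}{T}\int_{0}^{\infty}e^{-2t/T}\bigl|\int e^{-it\lambda}d\nu(\lambda)\bigr|^{2}dt=\frac{1}{\pi T}\int_{\R}\lvert G_{\nu}(E+iT^{-1})\rvert^{2}\,dE$. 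Applying this with $\nu=\nu_{n}$, and noting $G_{\nu_{n}}(z)=\sum_{j}\int_{0}^{\pi/q}\frac{\cos(nq\varkappa)\varphi_{j}(\varkappa)}{\lambda_{j}(\varkappa)-z}\tfrac{d\varkappa}{\pi/q}$, yields \eqref{dyn:11}. The one step that needs care is this Fubini interchange, since $g(t)$ decays only like $t^{-1/2}$ and is not integrable; I would justify it by truncating the $t$-integral and passing to the limit, or equivalently by running the Plancherel argument at the operator level through $\int_{0}^{\infty}e^{i(E+iT^{-1})t}e^{-itH_{q}}\,dt=-i(H_{q}-E-iT^{-1})^{-1}$, which also identifies $G_{\nu_{n}}(E+iT^{-1})$ with the sum of resolvent matrix elements $\langle\delta_{nq},(H_{q}-E-iT^{-1})^{-1}\delta_{0}\rangle+\langle\delta_{nq+1},(H_{q}-E-iT^{-1})^{-1}\delta_{1}\rangle$. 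The remainder is bookkeeping; the content is the Bloch transform together with the reflection symmetry that converts the phase $e^{-inq\varkappa}$ into $\cos(nq\varkappa)$ and halves the Brillouin zone to $[0,\pi/q]$.
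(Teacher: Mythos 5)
Your argument follows essentially the same route as the paper's: the Bloch--Floquet transform to reduce to the Floquet matrix, spectral decomposition of the fibre, the reflection symmetry $A_q(-\varkappa)=\overline{A_q(\varkappa)}$ to fold the Brillouin zone onto $[0,\pi/q]$ and produce the cosine, and the Abel--Plancherel identity to convert the time average into an energy integral. You diagonalise the time evolution first and then apply Plancherel to the resulting measure $\nu_n$; the paper applies Plancherel to each matrix element first and then diagonalises the resolvent. Either order is fine.

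There is, however, a gap in the last step. You form the \emph{summed amplitude} $g(t)=\langle\delta_{nq},e^{-itH_q}\delta_0\rangle+\langle\delta_{nq+1},e^{-itH_q}\delta_1\rangle$ and apply the Abel--Plancherel identity to $|g(t)|^2$, concluding that $\tfrac{2}{T}\int_0^\infty e^{-2t/T}|g(t)|^2\,dt$ equals the right-hand side of \eqref{dyn:11}. But \eqref{dyn:10} defines $P_{q,T}(nq)$ as the Abel average of the \emph{sum of squared moduli} $|\langle\delta_{nq},e^{-itH_q}\delta_0\rangle|^2+|\langle\delta_{nq+1},e^{-itH_q}\delta_1\rangle|^2$, which differs from $|g(t)|^2$ by the cross-term $2\Re\bigl(\overline{\langle\delta_{nq},e^{-itH_q}\delta_0\rangle}\,\langle\delta_{nq+1},e^{-itH_q}\delta_1\rangle\bigr)$, whose Abel average does not vanish in general. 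Applying Plancherel to each matrix element separately (as the paper does) and summing gives $P_{q,T}(nq)=\tfrac{1}{\pi T}\int_\R\bigl(|a(E)|^2+|b(E)|^2\bigr)\,dE$, where $a,b$ denote the Borel transforms carrying the $|\langle\Psi_\varkappa^{(j)},e_0\rangle|^2$ and $|\langle\Psi_\varkappa^{(j)},e_1\rangle|^2$ weights respectively, while the right-hand side of \eqref{dyn:11} is $\tfrac{1}{\pi T}\int_\R|a(E)+b(E)|^2\,dE$. These are not equal. What does hold, by the parallelogram identity $|a|^2+|b|^2=\tfrac12(|a+b|^2+|a-b|^2)\geq\tfrac12|a+b|^2$, is the one-sided bound $P_{q,T}(nq)\geq\tfrac{1}{2\pi T}\int_\R|a+b|^2\,dE$, and this is the only direction used downstream in Lemma~\ref{dyn:periodiclower}. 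Your proposal should therefore either carry out the Plancherel step separately on each entry and invoke the parallelogram inequality, or at least record explicitly that identifying the sum of squares with the square of the sum loses a factor; as written, it silently equates the two.
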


Lemma \ref{dyn:03.5} is obtained in the usual way (see the end of this section) by expressing the probabilities $P_{q,T}(nq)$ in terms of the corresponding entries of the resolvent operator associated with $H_{q}$, followed by diagonalizing the periodic operator $H_{q}$ in the Fourier space and then changing to the eigenbasis of the Floquet matrix.

In general there are no suitable lower bounds on each individual function $\varphi_{j}$ other than the usual exponential lower bound. For our purposes, an exponential lower bound on $\varphi_{j}$ does not suffice. One way around this, however, is that the functions $\{\varphi_{j}(\varkappa)\}_{j}$ do define the canonical  spectral measure 
\begin{equation}\label{dyn:12}
\mu_{\varkappa,q}= \sum_{j=1}^{q} \varphi_{j}(\varkappa)\delta_{\lambda_{j}(\varkappa)}
\end{equation}
 associated with the Floquet matrix $A_{q}(\varkappa)$, where $\delta_{\lambda_{j}(\varkappa)}$ is the Dirac measure at the eigenvalue $\lambda_{j}(\varkappa)$.

The only assumption of Lemma \ref{dyn:periodiclower} is for the measures $\mu_{\varkappa,q}$ evaluated at an interval $I\subset\R$ to be uniformly (in $\varkappa\in[0,\pi/q]$) bounded from below by a positive number $\eta>0$. This is enough (see Claim \ref{dyn:chebyshev}, below) to deduce that there exists a $1\leq j\leq q$ such that $B_{q}^{(j)}\cap I\neq\varnothing$ and
$$
|\{\varkappa:\varphi_{j}(\varkappa)>\eta/q\}|>\frac{\pi}{2q^{2}}.
$$ In the proof of the theorem, we shall choose the interval $I=B_{\varepsilon}(E_{0})\subset\R$ to be the vicinity of the minimum of the Lyapunov exponent, where $B_{\varepsilon}(E_{0})$ denotes a ball of radius $\varepsilon>0$ centred at the point in the spectrum $E_{0}\in\sigma_{\alpha}$ where $\gamma(E_{0})=\min_{\sigma_{\alpha}}\gamma$.

\begin{lemma}\label{dyn:periodiclower} Let $H_{q}$ be a periodic Schr\"odinger operator of period $q\geq1$ and let $B_{q}^{(j)}$ denote the $j$-th band in its spectrum with width $\ell_{j}$. Let $\mu_{\varkappa,q}$ be the canonical  spectral measure \eqref{dyn:12} and $P_{q,T}$ the probabilities  \eqref{dyn:10}. Let $I\subset\R$ be an  interval and suppose $\inf_{\varkappa\in[0,\pi/q]}\mu_{\varkappa,q}(I)>\eta>0$. There exist a band $B_{q}^{(j)}\cap I\neq\varnothing$ and constants $0<c,c_{1},C<\infty$ such that for $T>\frac{C}{c_{1}}\eta^{-2}q^{8}\ell_{j}^{-2}+1$, 
\begin{equation}\label{dyn:13}
P_{q,T}(nq) > \frac{c\eta^{2}}{q^{6}\ell_{j}T}
\end{equation} 
provided  $C\eta^{-1}q^{4}\ell_{j}^{-1}<n<c_{1}\eta q^{-4}\ell_{j}T$.
\end{lemma}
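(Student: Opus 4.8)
The plan is to start from the exact formula \eqref{dyn:11} of Lemma \ref{dyn:03.5}. Since $|z|^2\ge(\operatorname{Im}z)^2$ for $z\in\C$, and $\operatorname{Im}\big(\lambda_j(\varkappa)-E-iT^{-1}\big)^{-1}=P_T(\lambda_j(\varkappa)-E)$ with $P_T(x)=\tfrac{T^{-1}}{x^2+T^{-2}}$ the Poisson kernel, \eqref{dyn:11} gives
\[
P_{q,T}(nq)\ \ge\ \frac{1}{\pi T}\int_{\R}\Big(\sum_{i=1}^{q}\frac{q}{\pi}\int_{0}^{\pi/q}\cos(nq\varkappa)\,\varphi_i(\varkappa)\,P_T(\lambda_i(\varkappa)-E)\,d\varkappa\Big)^{2}dE .
\]
I would then restrict the outer integral to a subset of the band $B_q^{(j)}$, where $j$ is the band furnished by the Chebyshev-type Claim \ref{dyn:chebyshev}: on some $S_j\subset[0,\pi/q]$ with $|S_j|>\tfrac{\pi}{2q^2}$ one has $\varphi_j(\varkappa)>\eta/q$, and $B_q^{(j)}\cap I\ne\varnothing$. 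Since $\lambda_j$ is monotone on $[0,\pi/q]$ it has an inverse $\varkappa_*(\cdot):B_q^{(j)}\to[0,\pi/q]$, and the whole estimate ends with the substitution $E=\lambda_j(\varkappa_*)$, $dE=|\lambda_j'(\varkappa_*)|\,d\varkappa_*$. Two crude facts will be used throughout: the universal bound $|\lambda_j'|\le q$ (Hellmann--Feynman together with $\|\partial_\varkappa A_q\|=q$) and the identity $\sum_i\varphi_i\equiv 2$ (the $\Psi_\varkappa^{(i)}$ form an orthonormal basis of $\C^q$, since $A_q(\varkappa)$ is Hermitian).

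For the \emph{main term}, fix $E$ with $\varkappa_*=\varkappa_*(E)$ in a good position and isolate the $i=j$ summand and, inside it, the contribution of a short arc $A=[\varkappa_*-\delta,\varkappa_*+\delta]$ with $\delta\asymp(nq)^{-1}$. Partitioning $[0,\pi/q]$ into arcs of this length, $\cos(nq\varkappa)$ is controlled on each, and after discarding the arcs meeting a neighbourhood of the band edges $\{0,\pi/q\}$, the arcs on which $\cos(nq\varkappa)$ is not sign-definite and bounded away from zero, and those on which $|\lambda_j'|$ is atypically small, one is left with arcs $A\subset S_j$ on which simultaneously $\varphi_j>\eta/q$, $|\cos(nq\varkappa)|\ge\tfrac12$ with a fixed sign, and $|\lambda_j'|\asymp|\lambda_j'(\varkappa_*)|$ — the survival of such arcs is where the hypothesis $n>C\eta^{-1}q^4\ell_j^{-1}$ enters, making arcs of length $\asymp(nq)^{-1}$ short enough to fit through the Chebyshev set $S_j$ after all the discards. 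Because $T^{-1}/|\lambda_j'|\ll\delta$ — this is where $n<c_1\eta q^{-4}\ell_j T$ is used, so that the Poisson mass $P_T(\lambda_j(\cdot)-E)$ concentrates on a $\varkappa$-scale far shorter than $\delta$ and $\cos(nq\varkappa)\varphi_j(\varkappa)$ is essentially frozen on it — one gets $\int_A P_T(\lambda_j(\varkappa)-E)\,d\varkappa\asymp\pi/|\lambda_j'(\varkappa_*)|$, so the $i=j$, $\varkappa\in A$ part has absolute value $\gtrsim\eta\big/\big(q\,|\lambda_j'(\varkappa_*(E))|\big)$, uniformly for $E$ in the energy window $J_A=\lambda_j(A)$.

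The \emph{error terms} — the band-$j$ contribution from $\varkappa\notin A$, and all bands $i\ne j$ — must be at most half the main term. For $i\ne j$ one uses $|\lambda_i(\varkappa)-E|\ge\operatorname{dist}(E,B_q^{(i)})$, the bound $P_T(x)\le T^{-1}x^{-2}$, and $\sum_i\varphi_i\equiv2$; for $i=j$, $\varkappa\notin A$, one changes variables to $\mu=\lambda_j(\varkappa)$, uses monotonicity so that these $\mu$ avoid $J_A$, and uses $\int_{\R\setminus J_A}P_T(\operatorname{dist}(\mu,J_A))\,d\mu\le\pi$. Keeping $J_A$ in the central portion of $B_q^{(j)}$ bounds $\operatorname{dist}(J_A,\partial B_q^{(j)})$ and $\operatorname{dist}(J_A,B_q^{(i)})$ ($i\ne j$) below, so that — given the constraints on $n$ and $T$ in the statement — these errors are absorbed; one also discards the small set of energies ($E$-measure controlled by the $\varkappa$-measure on which $|\lambda_j'|$ is too small to run the Poisson concentration, hence $\lesssim q^{-1}$ times that threshold), at the cost of a fixed fraction of $|J_A|$. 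Summing over the admissible arcs $A$, writing $J=\bigcup_A J_A$, using $(\text{main}+\text{error})^2\ge\tfrac14(\text{main})^2$, and substituting $E=\lambda_j(\varkappa_*)$ turns the surviving piece into $\gtrsim\frac{\eta^2}{q^2 T}\int_{\lambda_j^{-1}(J)}|\lambda_j'(\varkappa_*)|^{-1}\,d\varkappa_*$; by Cauchy--Schwarz $\int_{\lambda_j^{-1}(J)}|\lambda_j'|^{-1}\ge|\lambda_j^{-1}(J)|^2\big/\!\int_0^{\pi/q}|\lambda_j'|=|\lambda_j^{-1}(J)|^2/\ell_j$, and since the admissible $\varkappa_*$ retain total measure bounded below by a numerical multiple of $q^{-2}$ this yields $P_{q,T}(nq)\gtrsim\frac{\eta^2}{q^{6}\ell_j T}$, i.e.\ \eqref{dyn:13}.

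The main obstacle is the simultaneous bookkeeping in the error step: the window $J_A$ must stay away from the edges of $B_q^{(j)}$ (to control the in-band tail and keep $|\lambda_j'|$ bounded below on $A$) and away from every neighbouring band (for the $i\ne j$ terms), the underlying arc $A$ must be short enough to sit inside the Chebyshev set $S_j$ while $\cos(nq\varkappa)$ is sign-definite on it, and the collection of admissible $\varkappa_*$ must still carry total measure polynomial in $q^{-1}$, so that the concluding Cauchy--Schwarz step is not lossy. Reconciling these competing demands is precisely what dictates the explicit form of the hypotheses $n>C\eta^{-1}q^4\ell_j^{-1}$, $n<c_1\eta q^{-4}\ell_j T$ and $T>\tfrac{C}{c_1}\eta^{-2}q^8\ell_j^{-2}+1$, the last of which is exactly the condition making the $n$-range nonempty.
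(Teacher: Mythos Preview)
Your overall architecture matches the paper's: take the imaginary part in \eqref{dyn:11}, invoke Claim \ref{dyn:chebyshev} to select the band $j$, partition $[0,\pi/q]$ by the zeros of $\cos(nq\varkappa)$, place $\varkappa_*=\lambda_j^{-1}(E)$ in a good arc, and compare a main piece against in-band and off-band remainders. The main-term estimate, the $i\neq j$ error, and the closing substitution $E=\lambda_j(\varkappa_*)$ are all essentially as in the paper (your Cauchy--Schwarz device at the end is a pleasant variant that avoids the explicit upper bound on $|\lambda_j'|$).

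The genuine gap is the in-band tail, i.e.\ the $i=j$ contribution from $\varkappa\notin A$. Your bound via $\mu=\lambda_j(\varkappa)$ and $\int_{\R\setminus J_A}P_T(\operatorname{dist}(\mu,J_A))\,d\mu\le\pi$ is too crude: after dividing by any lower bound $|\lambda_j'|\ge L$ and using $\varphi_j\le 2$, this error is of order $q/L$, whereas your main term is only of order $\eta/|\lambda_j'(\varkappa_*)|\le \eta/L$. The ratio is $\gtrsim q/\eta$ regardless of $L$, so the error swamps the main term and no choice of $n,T$ within the stated hypotheses repairs this. The paper does \emph{not} bound this tail crudely; it exploits the oscillation of $\cos(nq\varkappa)$. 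It discards the (favourable) positive area on the even subintervals and bounds only the negative area on each odd subinterval $I_{2k+2l-1}$ individually, via the arctangent/Poisson computation \eqref{dyn:20}--\eqref{dyn:21}. Each such contribution is $O\!\big(\tfrac{nq}{\ell_jT}\cdot l^{-2}\big)$, summable in $l$; substituting the hypothesis $n<c_1\eta q^{-4}\ell_jT$ then makes the total tail proportional to $c_1$, hence smaller than the main term for $c_1$ small (see \eqref{dyn:22}, \eqref{dyn:23.5}). This cancellation is the heart of the argument and is precisely why the upper constraint on $n$ takes its stated form.

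A secondary issue: your claim that arcs $A\subset S_j$ (equivalently that $\varphi_j$ is ``frozen'' on the Poisson scale) needs a quantitative Lipschitz bound on $\varphi_j$. A priori $S_j=\{\varphi_j>\eta/q\}$ is merely measurable and could be a fat Cantor set containing no arc of length $(nq)^{-1}$. The paper supplies $\max_{\Lambda}|\varphi_j'|\le C_3q^4/\ell_j$ via the perturbation formula \eqref{dyn:16} together with the eigenvalue-gap lower bound integrated from \eqref{dyn:56}; this is what converts ``$I_k$ meets $S_j$'' into ``$\min_{\widetilde I_k}\varphi_j>\eta/(2q)$'', and is where the lower threshold $n>C\eta^{-1}q^4\ell_j^{-1}$ actually enters.
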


\begin{proof}
The imaginary part of the function inside the modulus in \eqref{dyn:11} is given by the function $\frac{q}{\pi T}h$, where 
\begin{equation}
h(E)=\sum_{i=1}^{q}\int_{0}^{\frac{\pi}{q}}g_{i}(\varkappa)\,d\varkappa,\quad g_{i}(\varkappa) = \frac{\cos(nq\varkappa)\varphi_{i}(\varkappa)}{(\lambda_{i}(\varkappa)-E )^{2}+T^{-2}}.
\end{equation}  since the squared modulus is at least the squared imaginary part $|\cdot|^{2}\geq \Im^{2}$,  we have 
\begin{equation}\label{dyn:14}
P_{q,T}(nq) \geq \frac{q^{2}}{\pi^{3}T^{3}}\int_{\R}(h(E))^{2}\,dE
\end{equation} having factored out the coefficient $\frac{q}{\pi T}$ of the imaginary part.  Now, for an indexing set $K\subset \N$, for a certain eigenvalue $\lambda_{j}$ and for a subinterval $\widetilde{I}_{k}\subset[0,\pi/q]$, all of which we shall define in the next paragraph, the problem is reduced to
\begin{equation}\label{dyn:55}
\int_{\R}(h(E))^{2}\,dE > \sum_{k\in K}\int_{\lambda_{j}(\widetilde{I}_{k})} (h(E))^{2}\,dE \geq \#K\min_{k\in K}|\lambda_{j}(\widetilde{I}_{k})|\min_{E\in\lambda_{j}(\widetilde{I}_{k})}(h(E))^{2}
\end{equation} where the set $\lambda_{j}(\widetilde{I}_{k})=\{\lambda_{j}(\varkappa):\varkappa\in\widetilde{I}_{k}\}\subset B_{q}^{(j)}$ is the image of the interval $\widetilde{I}_{k}$ under $\lambda_{j}$. 

The eigenvalue $\lambda_{j}$ in \eqref{dyn:55} is chosen specifically to be the one given by Claim \ref{dyn:chebyshev}, below, for which $B_{q}^{(j)}\cap I\neq \varnothing$ and 
\begin{equation}\label{dyn:71}
|\{\varkappa:\varphi_{j}(\varkappa)>\eta/q\}|>\frac{\pi}{2q^{2}}.
\end{equation}
 The subinterval  $\widetilde{I}_{k}=[\frac{k\pi}{nq}-\frac{3\pi}{8nq},\frac{k\pi}{nq}+\frac{3\pi}{8nq}]$ is chosen so as to lower bound the cosine by $\min_{\varkappa\in\widetilde{I}_{k}}|\cos(nq\varkappa)|>\frac{1}{\pi}$. The indexing set $K\subset\N$ is the set of indices $k\in K$ for which the interior  $\text{int}(I_{k})$ intersects with the set $\{\varkappa:\varphi_{j}(\varkappa)>\eta/q\}$, where the subinterval $I_{k}=[\frac{k\pi}{nq}-\frac{\pi}{2nq},\frac{k\pi}{nq}+\frac{\pi}{2nq}]$ contains $\widetilde{I}_{k}$ which partitions the half-torus $[0,\pi/q]$ by the roots of the cosine: $\cos(nq(\frac{k\pi}{nq}\pm\frac{\pi}{2nq}))=0$. The idea is to place the location $\lambda_{j}^{-1}(E)$ of the main peak of the $j$-th function $g_{j}$ inside the interval $\widetilde{I}_{k}$ where we have a lower bound on both the cosine and on the function $\varphi_{j}$.

The quantities $\#K$ and $|\lambda_{j}(\widetilde{I}_{k})|$ can easily be estimated using the lower bounds \eqref{dyn:71} and \eqref{dyn:07.5}, respectively. Therefore the bulk of the problem lies in estimating the factor $\min_{E\in\lambda_{j}(\widetilde{I}_{k})}(h(E))^{2}$ in \eqref{dyn:55}, for which the main idea is to split $h(E)$ as in \eqref{dyn:53} and estimate the three terms separately. Note that the centres of the subintervals $I_{k},\widetilde{I}_{k}$ were chosen so that $\cos(nq\frac{k\pi}{nq})=(-1)^{k}$, so the cosine $\cos(nq\varkappa)$ is positive on the even subintervals $\varkappa\in\widetilde{I}_{2k}$, and negative on the odd ones. Since $h$ is being squared, we could either place $E\in\lambda_{j}(\widetilde{I}_{2k})$ and bound $h(E)$ from below or place $E\in\lambda_{j}(\widetilde{I}_{2k+1})$ and bound $h(E)$ from above. The two procedures are similar, so we only consider the former.  We shall assume from now on that the energy $E\in\lambda_{j}(\widetilde{I}_{2k})$ is fixed, for some $2k\in K$. Writing 
\begin{equation}\label{dyn:53}
h(E) =\int_{\widetilde{I}_{2k}}g_{j}(\varkappa)\,d\varkappa+\int_{\widetilde{I}_{2k}^{\mathsf{c}}}g_{j}(\varkappa)\,d\varkappa + \sum_{i\neq j} \int_{0}^{\frac{\pi}{q}} g_{i}(\varkappa)\,d\varkappa, 
\end{equation}
our aim is to show that the (positive) area obtained from the first term  outweighs the negative area obtained from the second two terms. 

Let us start bounding the first term in \eqref{dyn:53}, from below.  The first task is to bound $g_{j}(\varkappa)$ from below for each $\varkappa\in\widetilde{I}_{2k}$. For each $\varkappa\in\widetilde{I}_{2k}$, the denominator of $g_{j}(\varkappa)$ will be bounded from above by 
\begin{equation}\label{dyn:57}
|\lambda_{j}(\varkappa)-E|\leq \max_{\varkappa'\in\widetilde{I}_{2k}}|\lambda'_{j}(\varkappa')||\varkappa-\lambda_{j}^{-1}(E)|
\end{equation} whereas the numerator will be bounded from below by the constant $\frac{1}{\pi}\min_{\varkappa\in\widetilde{I}_{2k}}\varphi_{j}(\varkappa)$.

According to  \eqref{dyn:56}, below, the estimates on the derivative of the eigenvalue $\lambda_{j}$ deteriorates near the edges of the half-torus, so we deal with this issue by removing the edges of the half-torus in the following way: Define the interval $\Lambda=[\frac{\pi}{16q^{2}},\frac{\pi}{q}-\frac{\pi}{16q^{2}}] $ and its subset $ \widetilde{\Lambda}=[\frac{\pi}{8q^{2}},\frac{\pi}{q}-\frac{\pi}{8q^{2}}]$ and impose the additional condition on the indexing set $K$  that $I_{k}\subset\widetilde{\Lambda}$ for each $k\in K$ and from now on  assume $\widetilde{I}_{2k}\subset\widetilde{\Lambda}$.

As per \eqref{dyn:56}, below, there exists constants $0<d_{-},d_{+}<\infty$ such that 
\begin{equation}\label{dyn:07.5}
d_{-}\ell_{j}  \leq  |\lambda'_{j}(\varkappa)|\leq d_{+}q^{2}\ell_{j}, \quad \forall\varkappa\in\Lambda. 
\end{equation} The upper bound will be used on the first term in \eqref{dyn:53} and the lower bound on the other two.

We have $\lambda_{j}^{-1}(E)=\frac{2k\pi}{nq}+s$ for some $-\frac{3\pi}{8nq}\leq s\leq\frac{3\pi}{8nq}$, since $\lambda_{j}^{-1}(E)\in\widetilde{I}_{2k}$. Moreover, by assumption we have $\varkappa\in\widetilde{I}_{2k}\subset\widetilde{\Lambda}\subset\Lambda$ so  \eqref{dyn:07.5}  and \eqref{dyn:57} imply $|\lambda_{j}(\varkappa)-E| \leq d_{+}q^{2}\ell_{j}|\varkappa-\frac{2k\pi}{nq}-s|$, therefore
\begin{equation}\label{dyn:58.5}
g_{j}(\varkappa) >  \frac{1}{\pi (d_{+}q^{2}\ell_{j})^{2}}\frac{1}{(\varkappa-\frac{2k\pi}{nq}-s)^{2}+(d_{+}q^{2}\ell_{j}T)^{-2}}\min_{\varkappa\in\widetilde{I}_{2k}}\varphi_{j}(\varkappa),\quad \forall \varkappa\in\widetilde{I}_{2k}
\end{equation}
and thus 
\begin{equation}\label{dyn:18}
\begin{split}
\int_{\widetilde{I}_{2k}}g_{j}(\varkappa)d\varkappa
&>\frac{1}{\pi}\frac{T}{d_{+}q^{2}\ell_{j}}\bigg(\arctan\bigg(d_{+}q^{2}\ell_{j}T\bigg(\frac{3\pi}{8nq}-s\bigg)\bigg)+\arctan\bigg(d_{+}q^{2}\ell_{j}T\bigg(\frac{3\pi}{8nq}+s\bigg)\bigg)\bigg)\min_{\varkappa\in\widetilde{I}_{2k}}\varphi_{j}(\varkappa)\\
&>\frac{1}{\pi}\frac{T}{d_{+}q^{2}\ell_{j}}\bigg(\frac{\pi}{2}-\frac{4n}{3\pi d_{+}q\ell_{j}T}\bigg)\min_{\varkappa\in\widetilde{I}_{2k}}\varphi_{j}(\varkappa) \\
&>\frac{1
}{4d_{+}}\frac{T}{q^{2}\ell_{j}}\min_{\varkappa\in\widetilde{I}_{2k}}\varphi_{j}(\varkappa).
\end{split}
\end{equation}
In the first inequality we used oddness of the arctangent. In the second inequality we lower-bounded the arctangents by their minima over $-\frac{3\pi}{8nq}\leq s\leq\frac{3\pi}{8nq}$, which happens to be minimal at the edges $s=\pm\frac{3\pi}{8nq}$, and then we applied the lower bound in 
\begin{equation}\label{dyn:59}
\frac{\pi}{2}-\frac{1}{x} < \arctan(x) < \frac{\pi}{2}-\frac{1}{x}+\frac{1}{3x^{3}},\quad \forall x>0.
\end{equation} The third inequality follows from the upper bound assumption on $n<c_{1}\eta q^{-4}\ell_{j}T$.

Now let us estimate the second quantity in \eqref{dyn:53}. Indeed, first recall that the function $g_{j}$ is positive in the even subintervals $I_{2k+2l}$ and negative in the odd subintervals $I_{2k+2l-1}$. We shall ignore the positive area obtained over the even intervals $I_{2k+2l}$ and shall only estimate all of the negative area obtained over the odd subintervals $I_{2k+2l-1}$. Namely, we shall only estimate the RHS of 
$$
\int_{\widetilde{I}_{2k}^{\mathsf{c}}}g_{j}(\varkappa)\,d\varkappa > \sum_{l}\int_{I_{2k+2l-1}}g_{j}(\varkappa)\,d\varkappa. 
$$
 Furthermore, we shall consider only the odd subintervals $I_{2k+2l-1}$ to the right ($l\geq1$) of $I_{2k}$, the argument for the intervals to the left of $I_{2k}$ is very similar. In particular, we shall estimate the quantity 
$$
\sum_{l\geq1}\int_{I_{2k+2l-1}}g_{j}(\varkappa)\,d\varkappa
$$ from below. We have the lower bound on the derivative \eqref{dyn:07.5}  when $I_{2k+2l-1}\subset\Lambda$, but do not have it when $I_{2k+2l-1}\not\subset\Lambda$, so we treat both cases separately. Let us start with the former.

The lower bound \eqref{dyn:07.5} gives  $\min_{\varkappa \in I_{2k+2l-1}}|\lambda'_{j}(\varkappa)|\geq d_{-}\ell_{j}$, since  $I_{2k+2l-1}\subset \Lambda$. And since $\lambda^{-1}_{j}(E)\leq \widetilde{b}_{2k}$ where $\widetilde{b}_{2k}=\frac{2k\pi}{nq}+\frac{3\pi}{8nq}$ is the right edge of the subinterval $\widetilde{I}_{2k}=[\widetilde{a}_{2k},\widetilde{b}_{2k}]$, we have 
\begin{equation}\label{dyn:58}
|\lambda_{j}(\varkappa)-E|\geq |\varkappa-\lambda^{-1}_{j}(E)|\min_{\varkappa \in \Lambda}|\lambda'_{j}(\varkappa)| \geq  d_{-} \ell_{j}|\varkappa-\widetilde{b}_{2k}|
\end{equation}
which implies 
\begin{equation}\label{dyn:67}
g_{j}(\varkappa)\leq \frac{2}{(d_{-}\ell_{j})^{2}}\frac{1}{(\varkappa-\widetilde{b}_{2k})^{2}+(d_{-}\ell_{j} T)^{-2}},\quad\forall \varkappa\in I_{2k+2l-1}
\end{equation}
for  any $l\geq 1$ such that $I_{2k+2l-1}\subset \Lambda$, having used $\varphi_{j}\leq 2$ and $|\cos|\leq 1$. The upper bound \eqref{dyn:67} implies 
\begin{equation}\label{dyn:20}
\int_{I_{2k+2l-1}}g_{j}(\varkappa)\,d\varkappa > -\frac{2T}{d_{-}\ell_{j}}A_{2k+2l-1}
\end{equation} 
where 
\begin{equation*}
A_{2k+2l-1}=\arctan\bigg(\frac{\pi d_{-}\ell_{j}T}{nq}\bigg(2l-\frac{7}{8}\bigg)\bigg)-\arctan\bigg(\frac{\pi d_{-}\ell_{j}T}{nq}\bigg(2l-\frac{15}{8}\bigg)\bigg)
\end{equation*}
having computed the definite integral, substituted the limits of integration and then simplified the resulting expression using $(2k+2l-1)\frac{\pi}{nq}\pm\frac{\pi}{2nq}-\widetilde{b}_{2k} = (2l-\frac{11}{8}\pm\frac{4}{8})\frac{\pi}{nq}$. 

We now apply both of the estimates on the arctangent in \eqref{dyn:59} to the term $A_{2k+2l-1}$, to obtain  
\begin{equation}\label{dyn:21}
A_{2k+2l-1}<\frac{nq}{\pi d_{-}\ell_{j}T}\bigg(\frac{1}{(2l-\frac{15}{8})(2l-\frac{7}{8})}+\bigg(\frac{nq}{\pi d_{-}\ell_{j}T}\bigg)^{2}\frac{1}{3(2l-\frac{7}{8})^{3}}\bigg) < \frac{c_{1}\eta }{\pi q^{3}d_{-}}R(l)
\end{equation}
having substituted $\frac{nq}{\pi d_{-}\ell_{j}T}<1 $ (which follows from $n<c_{1}\eta q^{-4}\ell_{j}T$) inside the bracket (to get the rational function $R(l)$) and substituted $n<c_{1}\eta q^{-4}\ell_{j}T$ outside the bracket to get the coefficient in the RHS. Since $R(l)$ decays quadratically, we have $\sum_{l\geq1}R(l)<\infty$, so \eqref{dyn:20} and \eqref{dyn:21} give  
\begin{equation}\label{dyn:22}
\sum_{l\geq 1;I_{2k+2l-1}\subset\Lambda}\int_{I_{2k+2l-1}}g_{j}(\varkappa)\,d\varkappa 
> -\frac{c_{1}\eta T}{q^{3}\ell_{j}}C_{2}.
\end{equation} 

We now turn to the more straightforward case that $I_{2k+2l-1}\not\subset\Lambda$ for $l\geq1$. Indeed, we must estimate the quantity $|\lambda_{j}(\varkappa)-E|$, from below, in an alternative way to \eqref{dyn:58}. Indeed, the lower bound \eqref{dyn:07.5} implies 
\begin{equation}\label{dyn:60}
|\lambda_{j}(\varkappa)-E|\geq \bigg|\lambda_{j}\bigg(\frac{\pi}{q}-\frac{\pi}{16q^{2}}\bigg)-\lambda_{j}\bigg(\frac{\pi}{q}-\frac{\pi}{8q^{2}}\bigg)\bigg|\geq \frac{d_{-}\ell_{j}\pi}{16q^{2}},\quad \forall\varkappa\in I_{2k+2l-1}
\end{equation}
since the eigenvalue $\lambda_{j}$ is monotonic, since $\lambda_{j}^{-1}(E)\leq \frac{\pi}{q}-\frac{\pi}{8q^{2}}$ and since $I_{2k+2l-1}\subset [\frac{\pi}{q}-\frac{\pi}{16q^{2}},\frac{\pi}{q}]$ (in the case that $I_{2k+2l-1}$ sits on the edge of $\Lambda$, one need only scale \eqref{dyn:60}, slightly), therefore
\begin{equation}\label{dyn:61}
g_{j}(\varkappa)\geq -\frac{2}{(\frac{d_{-}\ell_{j}\pi}{16q^{2}})^{2}+T^{-2}}>-2\frac{16^{2}q^{4}}{(d_{-}\ell_{j}\pi)^{2}},\quad \forall\varkappa\in I_{2k+2l-1}
\end{equation}
for each $l\geq1$ for which $I_{2k+2l-1}\not\subset\Lambda$, having used $\varphi_{j}\leq 2$ and $|\cos|\leq 1$. \eqref{dyn:61} implies  
\begin{equation}\label{dyn:23}
\sum_{l\geq1;I_{2k+2l-1}\not\subset\Lambda}\int_{I_{2k+2l-1}}g_{j}(\varkappa)\,d\varkappa 
>-2n|I_{2k+2l-1}|\frac{16^{2}q^{4}}{(d_{-}\ell_{j}\pi)^{2}}=-2\frac{16^{2}q^{3}}{(d_{-}\ell_{j})^{2}\pi}
\end{equation} since there are less than $n$ intervals satisfying $I_{2k+2l-1}\not\subset\Lambda$ and $|I_{2k+2l-1}|=\frac{\pi}{nq}$. 

For the third term in \eqref{dyn:53} we argue as follows. Since $\lambda_{j}^{-1}(E)\in\widetilde{I}_{2k}\subset\widetilde{\Lambda}$, by monotonicity of the eigenvalues we need only compare the eigenvalue at either of the two edges of $\Lambda,\widetilde{\Lambda}$, to obtain 
\begin{equation}\label{dyn:62}
\min_{i\neq j}|\lambda_{i}(\varkappa)-E|\geq  \min(|\lambda_{j}(0)-E|,|\lambda_{j}(\pi/q)-E|) \geq \frac{d_{-}\ell_{j}\pi}{16q^{2}},\quad \forall\varkappa\in[0,\pi/q]
\end{equation} where we may indeed have equality in the first inequality since we make no assumption on eigenvalue separation. Then, $\varphi_{j}\leq 2$,   $|\cos|\leq 1$ and \eqref{dyn:62}  imply 
\begin{equation}\label{dyn:63}
g_{i}(\varkappa) >-2\frac{16^{2}q^{4}}{(d_{-}\ell_{j}\pi)^{2}},\quad \forall\varkappa\in[0,\pi/q]
\end{equation} for each $i\neq j$, which implies  
\begin{equation}\label{dyn:23.7}
\sum_{i\neq j}\int_{0}^{\frac{\pi}{q}}g_{i}(\varkappa)\,d\varkappa 
>-2\frac{16^{2}q^{4}}{(d_{-}\ell_{j})^{2}\pi}
\end{equation} since the sum on the LHS has $q-1$ terms.

Finally, let us combine the estimates \eqref{dyn:18}-\eqref{dyn:23.7} to obtain a lower bound on $|h(E)|$. First, multiply \eqref{dyn:22}  and \eqref{dyn:23} by a factor of $2$ to account for the subintervals to the left of the $2k$-th one. All of the estimates \eqref{dyn:18}-\eqref{dyn:23.7} also hold for $E\in\lambda_{j}(\widetilde{I}_{k})$ for odd $k\in K$, with opposite signs (as mentioned previously, we are squaring $h(E)$ so it makes no difference). Combining all of the lower bounds \eqref{dyn:18}-\eqref{dyn:23.7}, we have: For sufficiently small $c_{1}>0$ and sufficiently large $C<\infty$, for any $E\in\lambda_{j}(\widetilde{I}_{k})$ and $k\in K$,  if $T>\frac{C}{c_{1}}\eta^{-2}q^{8}\ell_{j}^{-2}+1$, then 
\begin{equation}\label{dyn:23.5}
|h(E)|>\frac{T}{q^{2}\ell_{j}}\bigg(\frac{1}{4d_{+}}\min_{\varkappa\in\widetilde{I}_{k}}\varphi_{j}(\varkappa)-2c_{1}C_{2}\frac{\eta}{q}-6\frac{16^{2}q^{6}}{\pi d^{2}_{-}\ell_{j}T}\bigg)>\frac{c_{2}\eta T}{q^{3}\ell_{j}}
\end{equation} for $C\eta^{-1}q^{4}\ell_{j}^{-1}<n<c_{1}\eta q^{-4}\ell_{j}T$. In \eqref{dyn:23.5}, we used the lower bound $\min_{\varkappa\in\widetilde{I}_{k}}\varphi_{j}(\varkappa)>\frac{\eta}{2q}$,
which follows from the lower bound assumption on $n>C\eta^{-1}q^{4}\ell_{j}^{-1}$: Indeed, recall that $\max_{\varkappa\in I_{k}}\varphi_{j}(\varkappa)>\frac{\eta}{q}$ and we shall argue below, that 
\begin{equation}\label{dyn:07.6}
\max_{\varkappa\in\Lambda}|\varphi'_{j}(\varkappa)|\leq \frac{C_{3}q^{4}}{\ell_{j}}
\end{equation}
thus \eqref{dyn:23.5} follows for large $C<\infty$:
$$|\min_{\varkappa\in\widetilde{I}_{k}}\varphi_{j}(\varkappa)-\max_{\varkappa\in I_{k}}\varphi_{j}(\varkappa)| \leq | I_{k}|\max_{\varkappa\in\Lambda}|\varphi'_{j}(\varkappa)| \leq \frac{\pi C_{3}q^{3}}{\ell_{j} n}<\frac{\pi C_{3}\eta}{C q}.$$

Recalling \eqref{dyn:14} and \eqref{dyn:55}, 
\begin{equation}\label{dyn:68}
P_{q,T}(nq) > \frac{q^{2}}{\pi^{3}T^{3}}\#K\min_{k\in K}|\lambda_{j}(\widetilde{I}_{k})|\min_{E\in\lambda_{j}(\widetilde{I}_{k})}(h(E))^{2} 
\end{equation}
then Claim \ref{dyn:chebyshev}, below, gives 
$$
\#K\geq (|\{\varkappa:\varphi_{j}(\varkappa)>\eta/q\}|-|\widetilde{\Lambda}^{\mathsf{c}}|)/|I_{k}|>\frac{n}{4q}
$$ 
and \eqref{dyn:07.5} gives 
$$
|\lambda_{j}(\widetilde{I}_{k})|\geq |\widetilde{I}_{k}| \min_{\varkappa\in\Lambda}|\lambda'_{j}(\varkappa)| \geq \frac{3\pi d_{-}\ell_{j}}{4nq}
$$
with which the lemma follows from \eqref{dyn:23.5} and \eqref{dyn:68}.

\emph{Sketch of the proof of estimates \eqref{dyn:07.5}.} The characteristic polynomial of the Floquet matrix \eqref{dyn:floquet} is given by 
\begin{equation}\label{dyn:06}
D_{\varkappa,q}(E)=\det(A_{q}(\varkappa)-E) = \Delta_{q}(E)+2(-1)^{q-1}\cos(q\varkappa)
\end{equation}
where the discriminant $\Delta_{q}(E)$ is a polynomial of degree $q$ with real coefficients. Last \cite[Lemma 1]{La94} proves \eqref{dyn:07} for the discriminant $D_{\frac{\pi}{2q},q}$.  The arguments of Last can also be repeated for the characteristic polynomials $D_{\varkappa,q}$, for each $\varkappa\in(0,\frac{\pi}{q})$. By doing so, one obtains:  
\begin{equation}\label{dyn:07}
(1+\sqrt{5})(1-|\cos(q\varkappa)|)\leq \ell_{j}|D_{\varkappa,q}'(\lambda_{j}(\varkappa))|\leq e|D_{\varkappa,q}(\lambda_{j}(0))-D_{\varkappa,q}(\lambda_{j}(\pi/q))|
\end{equation} which holds for every $\varkappa\in(0,\frac{\pi}{q})$ and $j=1,\dots,q$. Evaluating the characteristic polynomial \eqref{dyn:06} at the eigenvalue $\lambda_{j}(\varkappa)$ and then differentiating with respect to $\varkappa\in\T_{q}$, gives  
$$
|\Delta_{q}'(\lambda_{j}(\varkappa))||\lambda_{j}'(\varkappa)|=2q|\sin(q\varkappa)|
$$
and since $\frac{d}{dE}\Delta_{q}=\frac{d}{dE}D_{\varkappa,q}$ for any $\varkappa\in\T_{q}$,  \eqref{dyn:07} also holds for  $|\Delta_{q}'(\lambda_{j}(\varkappa))|$, therefore
\begin{equation}\label{dyn:56}
2q\sin(q\varkappa) \frac{\ell_{j}}{4e}  \leq  |\lambda'_{j}(\varkappa)|\leq \frac{2q\sin(q\varkappa)}{1-|\cos(q\varkappa)|}\frac{\ell_{j}}{1+\sqrt{5}},\quad \forall\varkappa\in\bigg(0,\frac{\pi}{q}\bigg).
\end{equation}  By evaluating the LHS and RHS of the estimates \eqref{dyn:56} at the edge $\varkappa=\frac{\pi}{16q^{2}}$, one obtains \eqref{dyn:07.5}.  

\emph{Proof of \eqref{dyn:07.6}.} Expressing the function $\varphi_{j}$ as a sum of the squares of its real and imaginary part, taking the derivative followed by an application of the Cauchy-Schwarz inequality, gets us 
\begin{equation}\label{dyn:70}
|\varphi'_{j}(\varkappa)|\leq 2|\dot{\Psi}^{(j)}_{\varkappa}(0)||\Psi^{(j)}_{\varkappa}(0)|+2|\dot{\Psi}^{(j)}_{\varkappa}(1)||\Psi^{(j)}_{\varkappa}(1)|
\end{equation} where $\dot{\Psi}^{(j)}$ denotes the component-wise derivative of the eigenvector $\Psi^{(j)}$, with respect to $\varkappa$. 

The eigenvalues of the Floquet matrix $A_{q}(\varkappa)$ are simple in the interior $\varkappa\in(0,\pi/q)$; therefore, by perturbation theory one obtains the formula for the derivative
\begin{equation}\label{dyn:16}
\dot{\Psi}^{(j)}_{\varkappa}
= -\sum_{k\neq j}\frac{\langle\Psi^{(k)}_{\varkappa},\dot{A}_{q}(\varkappa)\Psi^{(j)}_{\varkappa}\rangle}{\lambda_{k}(\varkappa)-\lambda_{j}(\varkappa)}\Psi^{(k)}_{\varkappa}.
\end{equation}

The estimate \eqref{dyn:07.6} follows from \eqref{dyn:56}-\eqref{dyn:16}, as follows. An application of the Cauchy-Schwarz inequality followed by estimating the absolute value of the denominator of  \eqref{dyn:16} from below by integrating the lower bound of \eqref{dyn:56} from either edge $0,\frac{\pi}{q}$ of the half-torus until the point $\varkappa\in(0,\frac{\pi}{q})$, combined with \eqref{dyn:70}, gets us 
$$
|\varphi'_{j}(\varkappa)|\leq \frac{8eq^{2}\ell_{j}^{-1}}{1-|\cos(q\varkappa)|}
$$ the RHS of which is to be evaluated at the edge $\varkappa=\frac{\pi}{16q^{2}}$, to get \eqref{dyn:07.6}. 

\begin{claim}\label{dyn:chebyshev}
There exists $1\leq j\leq q$ such that $B_{q}^{(j)}\cap I\neq\varnothing$ and $|\{\varkappa:\varphi_{j}(\varkappa)>\eta/q\}|>\frac{\pi}{2q^{2}}$.
\end{claim}
\emph{Proof.} First note that 
$
\mu_{\varkappa,q}(I) = \sum_{\lambda_{j}(\varkappa)\in I} \varphi_{j}(\varkappa)\leq \sum_{j\in J}\varphi_{j}(\varkappa)
$ where $J=\{j:B_{q}^{(j)}\cap I\neq\varnothing\}$. Towards a contradiction let us suppose that the conclusion of the claim is false. Then for every $j\in J$ we have $|\{\varkappa:\varphi_{j}(\varkappa)\leq\eta/q\}|\geq\frac{\pi}{q}-\frac{\pi}{2q^{2}}$, so $\#J\leq q$  implies 
$$
\bigg|\bigcap_{j\in J}\{\varkappa:\varphi_{j}(\varkappa)\leq\eta/q\}\bigg|\geq\frac{\pi}{q} - \frac{\pi\#J}{2q^{2}} \geq \frac{\pi}{2q}.
$$
 Then $\bigcap_{j\in J}\{\varkappa:\varphi_{j}(\varkappa)\leq\eta/q\}\subseteq\{\varkappa:\sum_{j\in J}\varphi_{j}(\varkappa)\leq\eta\}$ contradicts $\inf_{\varkappa\in[0,\frac{\pi}{q}]}\mu_{\varkappa,q}(I)>\eta$.
\end{proof}

\emph{On Lemma \ref{dyn:03.5}.} For any bounded Schr\"odinger operator $H:\ell^{2}(\Z)\rightarrow \ell^{2}(Z)$, for any  $\psi,\phi\in\ell^{2}(\Z)$ and $T>0$, one obtains the remarkable identity 
\begin{equation*}
\frac{2}{T}\int_{0}^{\infty}|\langle\phi,e^{-itH}\psi\rangle|^{2}e^{-\frac{2t}{T}}dt = \frac{1}{\pi T}\int_{\R}|\langle \phi,(H-E-iT^{-1})^{-1}\psi \rangle|^{2} \,dE 
\end{equation*} 
by applying Plancherel's theorem: 
$$\int_{\R}|\hat{g}(\xi)|^{2}\,d\xi=\int_{\R}|g(t)|^{2}\,dt, \text{ where }\hat{g}(\xi)=\int_{\R}g(t)e^{-2\pi i\xi t}\,dt$$ for any $g\in L^{1}(\R)\cap L^{2}(\R)$ to the function 
$g(t) = \langle\phi,e^{-itH-tT^{-1}}\psi\rangle\chi_{[0,\infty)}(t)$, while applying the identity 
$$
\langle\phi,e^{-itH-tT^{-1}-2i\pi t\xi}\psi\rangle=\int_{\R}e^{-it\lambda-tT^{-1}-2i\pi t\xi}d\mu_{\phi,\psi}(\lambda)
$$ (see \eqref{dyn:04}, below) and scaling $\xi$ appropriately.

The periodic operator $H_{q}$ is diagonalizable in the Fourier space $L^{2}(\T_{q}\mapsto\C^{q},\langle \Psi,\Phi \rangle_{L^{2}})$ where $\T_{q}= \R/\frac{2\pi}{q}\Z$ and 
$\langle \Psi,\Phi \rangle_{L^{2}} = \sum_{|l|\leq\frac{q}{2}}\int_{\T_{q}} \overline{\Psi_{\varkappa}(l)}\Phi_{\varkappa}(l)\frac{d\varkappa}{\frac{2\pi}{q}}$. Indeed, we construct below, two unitary operators $U_{1,q}:\ell^{2}(\Z)\rightarrow\ell^{2}(\Z\mapsto\C^{q})$ and $U_{2,q}:\ell^{2}(\Z\mapsto\C^{q})\rightarrow L^{2}(\T_{q}\mapsto\C^{q})$ such that $$U_{q}H_{q}=M_{q}U_{q}$$ where $U_{q}=U_{2,q}U_{1,q}: \ell^{2}(\Z)\rightarrow L^{2}(\T_{q}\mapsto\C^{q})$ denotes the block Fourier transform and $M_{q}$ the multiplication operator acting as a multiplication by the Floquet matrix \eqref{dyn:floquet}.

For $e_{0}=(0,\dots,1,\dots,0)\in\C^{q}$, unitary equivalence and $U_{q}\delta_{nq}=e_{0}e_{n,q}\in L^{2}(\T_{q}\mapsto \C^{q})$ imply 
\begin{equation}\label{dyn:08}
\langle \delta_{nq},(H_{q}-E-iT^{-1})^{-1}\delta_{0} \rangle 
= \langle e_{0}e_{n,q},(M_{q}-E-iT^{-1})^{-1}e_{0}e_{0,q} \rangle
\end{equation}
where $e_{n,q}(\varkappa)=e^{inq\varkappa}$ is the $n$-th canonical basis vector for the Fourier space $L^{2}(\T_{q}\mapsto\C)$. Then expressing the vector $e_{0}$ in terms of the orthonormal eigenvectors $\{\Psi^{(j)}_{\varkappa}\}_{j=1}^{q}$ of the Floquet matrix gives 
$$
\langle e_{0}e_{n,q},(M_{q}-E-iT^{-1})^{-1}e_{0}e_{0,q} \rangle 
= \sum_{j=1}^{q}\int_{0}^{\frac{2\pi}{q}}\frac{e^{-inq\varkappa}|\langle\Psi_{\varkappa}^{(j)},e_{0}\rangle|^{2}}{\lambda_{j}(\varkappa)-E-iT^{-1}} \frac{d\varkappa}{\frac{2\pi}{q}} 
=  \sum_{j=1}^{q}\int_{0}^{\frac{\pi}{q}}\frac{\cos(nq\varkappa)|\langle\Psi_{\varkappa}^{(j)},e_{0}\rangle|^{2}}{\lambda_{j}(\varkappa)-E-iT^{-1}} \frac{d\varkappa}{\frac{\pi}{q}}
$$
since $(\lambda_{j}(-\varkappa),\Psi^{(j)}_{-\varkappa})=(\lambda_{j}(\varkappa),\overline{\Psi^{(j)}_{\varkappa}})$, seen by transposing the Floquet matrix. 

Let $U_{1,q}:\ell^{2}(\Z)\rightarrow\ell^{2}(\Z\mapsto\C^{q})$ be the unitary operator taking blocks (of length $q$) of the sequence of Fourier coefficients $\hat{\psi}\in\ell^{2}(\Z)$ to a single component $\hat{\Psi}_{n}\in\C^{q}$ of a vector-valued sequence $\hat{\Psi}\in\ell^{2}(\Z\mapsto\C^{q})$ of Fourier coefficients. Namely, $$U_{1,q}\hat{\psi} =\hat{\Psi}=((\hat{\Psi}_{n}(l))_{|l|\leq\frac{q}{2}})_{n\in\Z}=((\hat{\psi}(nq+l))_{|l|\leq\frac{q}{2}})_{n\in\Z}$$ where $\hat{\Psi}_{n}(l)=\hat{\psi}(nq+l)\in\C$ denotes the $l$-th component of the vector $\hat{\Psi}_{n}\in\C^{q}$, which is itself the $n$-th component of the sequence $\hat{\Psi}\in\ell^{2}(\Z\mapsto\C^{q})$. 

Let $U_{2,q}:\ell^{2}(\Z\mapsto\C^{q})\rightarrow L^{2}(\T_{q}\mapsto\C^{q})$ be the Fourier transform taking the vector-valued sequence of Fourier coefficients $\hat{\Psi}\in\ell^{2}(\Z\mapsto\C^{q})$ to its corresponding function in the vector-valued Fourier space $L^{2}(\T_{q}\mapsto\C^{q})$. Namely, $$U_{2,q}\hat{\Psi}=\Psi=\sum_{n\in\Z}\hat{\Psi}_{n}e_{n,q}.$$ The function $\Psi\in L^{2}(\T_{q}\mapsto\C^{q})$ is vector-valued. We adopt the notation $\Psi_{\varkappa}$ to mean the function $\Psi$ evaluated at the point $\varkappa\in\T_{q}$. $\Psi_{\varkappa}$ is a vector in $\C^{q}$ which has components which we denote by $\Psi_{\varkappa}(l)$ and satisfy $\Psi_{\varkappa}(l)=\sum_{n\in\Z}\hat{\Psi}_{n}(l)e^{inq\varkappa}$.

\section{Proof of the theorem}

Lemma \ref{dyn:uniformweakintervals}, below, verifies the assumption of Lemma \ref{dyn:periodiclower} by ensuring that the canonical  spectral measures of the Floquet matrix evaluated at the vicinity of a minimum of the Lyapunov exponent on the spectrum are uniformly bounded from below in both the period and the phase. The general idea is to approximate the canonical  spectral measure $\mu_{\alpha,\theta}$ associated with the limiting quasiperiodic Schr\"odinger operator $H_{\alpha,\theta}$ by the canonical  spectral measures of the Floquet matrix $\mu_{\alpha_{m},\theta}^{(\varkappa)}$ where  $\alpha_{m}=\frac{p_{m}}{q_{m}}\rightarrow\alpha$ and combine this with the fact that the topological support of the canonical  spectral measure coincides with the spectrum of the operator $\text{supp}(\mu_{\alpha,\theta})=\sigma_{\alpha}$.

The spectral measure associated with a discrete self-adjoint one-dimensional bounded Schr\"odinger operator $H:\ell^{2}(\Z)\rightarrow\ell^{2}(\Z)$ is the complex Borel measure $\mu_{\phi,\psi}$ for which
\begin{equation}\label{dyn:04}
\langle\phi,g(H)\psi\rangle = \int_{\R}g(\lambda)\,d\mu_{\phi,\psi}(\lambda)
\end{equation}
holds for all compactly supported, bounded Borel measurable functions $g:\R\rightarrow\C$. 
The spectral measures $\mu_{\phi,\psi}$ are positive probability measures in the case that $\psi=\phi$. The topological support of a Borel measure $\mu$ on the real line is defined as 
$$
\text{supp}(\mu) = \{\lambda\in\R:\mu((\lambda-\varepsilon,\lambda+\varepsilon))>0,\, \forall \varepsilon>0\}.
$$ 
Let $\delta_{k}\in\ell^{2}(\Z)$ denote the $k$-th canonical basis vector of $\ell^{2}(\Z)$. In general it is not true that $\text{supp}(\mu_{\delta_{k},\delta_{k}})=\sigma(H)$, but indeed the canonical  spectral measure $\mu = \mu_{\delta_{0},\delta_{0}}+\mu_{\delta_{1},\delta_{1}}$ associated with $H$ satisfies 
$$
\sigma(H)=\text{supp}(\mu).
$$

\begin{lemma}\label{dyn:uniformweakintervals} 
Let $H_{\alpha,\theta}$ be as in \eqref{dyn:01} with  associated continuous Lyapunov exponent $\gamma$. Let $\alpha_{m}=\frac{p_{m}}{q_{m}}\rightarrow\alpha$ be a sequence of rationals and let $B_{\varepsilon}(E_{0})\subset\R$ denote a ball of radius $\varepsilon>0$ centred at $E_{0}\in\R$. Fix $\varepsilon>0$, $E_{0}\in\sigma_{\alpha}$, let $\beta>2\sup_{E\in B_{\varepsilon}(E_{0})}\gamma$ and suppose $|\alpha-\alpha_{m}|<e^{-\beta q_{m}}$ for every $m\geq1$, then 
\begin{enumerate}[(i)]
\item $\lim_{m\rightarrow\infty}\sup_{\theta\in\T}\sup_{\varkappa\in[0,\frac{\pi}{q_{m}}]}\big| \mu_{\alpha_{m},\theta}^{(\varkappa)}(B_{\varepsilon}(E_{0}))-  \mu_{\alpha,\theta}(B_{\varepsilon}(E_{0}))\big|=0$. 
\item $\mu_{\alpha,\theta}(B_{\varepsilon}(E_{0}))$ is continuous in  $\theta\in\T$. 
\end{enumerate}
\end{lemma}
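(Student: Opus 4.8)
The plan is to run everything through Borel transforms, exploiting that the diagonal resolvent entries at the sites $0,1$ of the twisted--periodic box operator are insensitive to the distant modifications that separate it from the whole--line operator $H_{\alpha,\theta}$, and to close the argument with the Gordon--type non-atomicity of Lemma \ref{dyn:nonatomic}. I would prove (ii) first. Because $f$ is Lipschitz and the shift $\theta\mapsto\theta+\alpha$ is an isometry of $\T$, the map $\theta\mapsto H_{\alpha,\theta}$ is Lipschitz in operator norm, so $\theta\mapsto g(H_{\alpha,\theta})$ is norm--continuous for every bounded continuous $g$ and hence $\theta\mapsto\mu_{\alpha,\theta}=\mu_{\delta_{0},\delta_{0}}+\mu_{\delta_{1},\delta_{1}}$ is weak--$\ast$ continuous; in particular $\theta\mapsto\mu_{\alpha,\theta}(B_{\varepsilon}(E_{0}))$ is lower semicontinuous while $\theta\mapsto\mu_{\alpha,\theta}(\overline{B_{\varepsilon}(E_{0})})$ is upper semicontinuous, and the two coincide as soon as $\mu_{\alpha,\theta}$ charges no point of $\partial B_{\varepsilon}(E_{0})$. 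By continuity of $\gamma$ the inequality $\beta>2\sup_{B_{\varepsilon}(E_{0})}\gamma$ persists on a slightly larger ball $B_{\varepsilon'}(E_{0})$, and $|\alpha-\alpha_{m}|<e^{-\beta q_{m}}$ is exactly the Gordon input (the factor $2$ reflecting that solutions must be matched over double periods), so Lemma \ref{dyn:nonatomic} gives that $\mu_{\alpha,\theta}$ has no atom in $B_{\varepsilon'}(E_{0})$ for any $\theta$. Hence the two semicontinuous functions agree and (ii) follows; the identical argument yields continuity of $\theta\mapsto\mu_{\alpha,\theta}(B_{r}(E_{0}))$ for every $r\le\varepsilon'$, which is used below.

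For (i), put $F_{\theta}(z)=\langle\delta_{0},(H_{\alpha,\theta}-z)^{-1}\delta_{0}\rangle+\langle\delta_{1},(H_{\alpha,\theta}-z)^{-1}\delta_{1}\rangle$ and $F^{(\varkappa)}_{m,\theta}(z)=\langle e_{0},(A_{q_{m}}(\varkappa)-z)^{-1}e_{0}\rangle+\langle e_{1},(A_{q_{m}}(\varkappa)-z)^{-1}e_{1}\rangle$, where $A_{q_{m}}(\varkappa)$ is the Floquet matrix \eqref{dyn:floquet} with potential $f(\theta+n\alpha_{m})$; by \eqref{dyn:04} and the spectral theorem for the matrix these are the Borel transforms of $\mu_{\alpha,\theta}$ and of $\mu^{(\varkappa)}_{\alpha_{m},\theta}$ from \eqref{dyn:12}. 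Identify $A_{q_{m}}(\varkappa)$ with the twisted--periodic operator on $\ell^{2}(\Lambda_{m})$, where $\Lambda_{m}$ is a fundamental domain for $\Z/q_{m}\Z$ with $0,1\in\Lambda_{m}$ at distance $\ge\lfloor q_{m}/2\rfloor-1$ from its two ends. Then $A_{q_{m}}(\varkappa)$ differs from $H_{\alpha,\theta}$ on $\ell^{2}(\Z)$ only through (a) cutting the two bonds at $\partial\Lambda_{m}$, (b) a diagonal perturbation of norm $\le\mathrm{Lip}(f)\,q_{m}e^{-\beta q_{m}}$ coming from the potential mismatch $f(\theta+n\alpha)-f(\theta+n\alpha_{m})$ on $\Lambda_{m}$, and (c) adjoining two wraparound bonds of modulus $\le 1$ at $\partial\Lambda_{m}$. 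All the operators involved are self-adjoint of norm $\le 2+\sup_{\T}|f|$, so the Combes--Thomas estimate gives exponential off-diagonal decay of their resolvents at rate proportional to $\Im z$, with constants depending only on $\sup_{\T}|f|$ and hence uniform in $\varkappa,\theta,m$. Substituting into three successive resolvent identities, the $(0,0)$ and $(1,1)$ corrections coming from (a) and (c) are exponentially small in $q_{m}\Im z$, and the one from (b) is $O(\mathrm{Lip}(f)\,q_{m}e^{-\beta q_{m}}(\Im z)^{-2})$, whence $\sup_{\theta,\varkappa}|F^{(\varkappa)}_{m,\theta}(z)-F_{\theta}(z)|\to 0$ as $m\to\infty$, for every $z$ in the upper half-plane.

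It remains to pass from convergence of Borel transforms to convergence of the masses of $B_{\varepsilon}(E_{0})$. All the measures are supported in the fixed interval $I_{0}=[-2-\sup_{\T}|f|,\,2+\sup_{\T}|f|]$ and have total mass $2$, so Stone--Weierstrass upgrades the previous step to $\sup_{\theta,\varkappa}\big|\int g\,d\mu^{(\varkappa)}_{\alpha_{m},\theta}-\int g\,d\mu_{\alpha,\theta}\big|\to 0$ for every $g\in C(I_{0})$. Squeezing $\chi_{B_{\varepsilon}(E_{0})}$ between continuous functions, $\chi_{B_{\varepsilon-\delta}(E_{0})}\le g^{-}_{\delta}\le\chi_{B_{\varepsilon}(E_{0})}\le g^{+}_{\delta}\le\chi_{B_{\varepsilon+\delta}(E_{0})}$, the remainder $\int(g^{+}_{\delta}-g^{-}_{\delta})\,d\mu_{\alpha,\theta}\le\mu_{\alpha,\theta}(B_{\varepsilon+\delta}(E_{0})\setminus B_{\varepsilon-\delta}(E_{0}))$ is, by the continuity in $\theta$ from (ii) applied at radii near $\varepsilon$, continuous in $\theta$, decreasing in $\delta$, and tends pointwise to $\mu_{\alpha,\theta}(\partial B_{\varepsilon}(E_{0}))=0$; Dini on the compact $\T$ makes it uniform in $\theta$. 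Combining the two uniform estimates gives (i). The main obstacle is the uniformity in the Floquet phase $\varkappa$: since $A_{q_{m}}(\varkappa)$ and $A_{q_{m}}(\varkappa')$ differ by an $O(1)$ rank-two perturbation one cannot compare Floquet measures for distinct $\varkappa$ directly, and must instead compare each to the whole--line operator, which succeeds only because of the uniform-in-$\varkappa$ norm bound on $A_{q_{m}}(\varkappa)$ together with the $q_{m}$-scale locality of the resolvent; the remaining delicate point, and the reason the Lyapunov hypothesis is present, is that $B_{\varepsilon}(E_{0})$ is open while the Floquet measures are atomic, so $\partial B_{\varepsilon}(E_{0})$ must be shown $\mu_{\alpha,\theta}$-null uniformly in $\theta$, which is exactly what Lemma \ref{dyn:nonatomic} delivers under $\beta>2\sup_{B_{\varepsilon}(E_{0})}\gamma$.
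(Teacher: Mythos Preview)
Your proof is correct and follows essentially the same route as the paper: uniform-in-$(\theta,\varkappa)$ convergence of the diagonal resolvent entries via Combes--Thomas plus resolvent identities, a Stone--Weierstrass upgrade to continuous test functions, a squeeze of $\chi_{B_{\varepsilon}(E_{0})}$ between continuous approximants, and Lemma~\ref{dyn:nonatomic} to kill the boundary mass. The only packaging differences are that you phrase (ii) via Portmanteau semicontinuity and close the uniformity-in-$\theta$ of the boundary remainder with Dini's theorem, whereas the paper builds explicit triangle bumps $g_{l,\pm}$ at $E_{0}\pm\varepsilon$ and runs the equivalent monotone-compactness argument by contradiction; these are interchangeable.
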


\begin{rmk}
The proof of Lemma \ref{dyn:uniformweakintervals} requires upgrading weak convergence to convergence on intervals. This requires that the limiting measure be non-atomic on the boundary of the interval. This fact (Lemma \ref{dyn:nonatomic}) is a variant of Gordon's lemma \cite[Theorem 10.3]{CFKS09}. The conclusion of Lemma \ref{dyn:uniformweakintervals} also holds under the  assumption $\beta>\sup_{E\in B_{\varepsilon}(E_{0})}\gamma$. Neither the factor of $3$ in the lower bound assumption on $\beta(\alpha)$ nor the assumption $\delta<\frac{1}{2}$ originate from this factor of $2$ from Gordon's argument. The proof of Lemma \ref{dyn:uniformweakintervals} is provided in Section \ref{dyn:secuniflowerbound} where we also provide a proof of Lemma \ref{dyn:nonatomic}.
\end{rmk}

Another key ingredient in the proof of the theorem is Proposition \ref{dyn:bandwidths}, which bounds the bandwidths of the periodic operator $H_{\alpha_{m},\theta}$, from below, in terms of the Lyapunov exponent associated with the limiting quasiperiodic operator $H_{\alpha,\theta}$.

\begin{prop}[\cite{Ha22}]\label{dyn:bandwidths}
Let $H_{\alpha,\theta}$ be a bounded discrete one-dimensional Schr\"odinger operator \eqref{dyn:01} with $\theta\in\mathbb{T}$, $\alpha\in\R\setminus\Q$ and associated continuous Lyapunov exponent $\gamma$. Let $\alpha_{m}=\frac{p_{m}}{q_{m}}\rightarrow\alpha$ be a sequence of rationals. Let  $B^{(j)}_{\alpha_{m},\theta}$ denote the $j$-th band in the spectrum of the periodic operator $H_{\alpha_{m},\theta}$. We have  
$$
\liminf_{m\rightarrow \infty}\min_{j\in[1,q_{m}], \theta\in\T}(q_{m}^{-1}\log |B^{(j)}_{\alpha_{m},\theta}|+\gamma(b^{(j)}_{\alpha_{m},\theta})) \geq 0
$$ where $b^{(j)}_{\alpha_{m},\theta}$ is the centre of the band $B^{(j)}_{\alpha_{m},\theta}$. 
\end{prop}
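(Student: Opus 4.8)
The plan is to reduce the lower bound on the bandwidth $|B^{(j)}_{\alpha_m,\theta}|$ to an upper bound on the discriminant $\Delta_{\alpha_m,\theta}$ of $H_{\alpha_m,\theta}$ (the real polynomial of degree $q_m$ appearing in \eqref{dyn:06}; by Floquet theory $\Delta_{\alpha_m,\theta}(E)=\pm\operatorname{tr}\Phi_{\alpha_m,\theta,[0,q_m-1]}(E)$), and then to control that discriminant uniformly in the phase by the Lyapunov exponent of the limiting operator. Fix a small $\ell_0>0$ and consider first a band with $|B^{(j)}_{\alpha_m,\theta}|\le 2\ell_0$, so that $I:=B_{\ell_0}(b^{(j)}_{\alpha_m,\theta})\supseteq B^{(j)}_{\alpha_m,\theta}$. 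Evaluating \eqref{dyn:06} at the Floquet numbers $0$ and $\pi/q_m$ shows that $\Delta_{\alpha_m,\theta}$ takes the values $+2$ and $-2$ at the two endpoints of $B^{(j)}_{\alpha_m,\theta}$, so the mean value theorem gives $|B^{(j)}_{\alpha_m,\theta}|\max_{B^{(j)}_{\alpha_m,\theta}}|\Delta'_{\alpha_m,\theta}|\ge 4$; combined with the Markov brothers' inequality $\max_I|\Delta'_{\alpha_m,\theta}|\le (q_m^2/\ell_0)\max_I|\Delta_{\alpha_m,\theta}|$ for the degree-$q_m$ polynomial $\Delta_{\alpha_m,\theta}$ on the interval $I$ of length $2\ell_0$, this yields $|B^{(j)}_{\alpha_m,\theta}|\ge (4\ell_0/q_m^2)\big(\max_I|\Delta_{\alpha_m,\theta}|\big)^{-1}$.

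It remains to bound $\max_I|\Delta_{\alpha_m,\theta}|$. Since $\det\Phi_{\alpha_m,\theta,[0,q_m-1]}(E)=1$, the eigenvalues of the monodromy matrix are $\mu(E),\mu(E)^{-1}$ with $|\mu(E)|\ge1$, and $q_m^{-1}\log|\mu(E)|$ equals the Lyapunov exponent $\gamma^{\mathrm{per}}_{\alpha_m,\theta}(E)$ of the periodic operator $H_{\alpha_m,\theta}$ (which vanishes on $\sigma(H_{\alpha_m,\theta})$). Hence $|\Delta_{\alpha_m,\theta}(E)|=|\mu(E)+\mu(E)^{-1}|\le 2e^{q_m\gamma^{\mathrm{per}}_{\alpha_m,\theta}(E)}$ and $\max_I|\Delta_{\alpha_m,\theta}|\le 2\,e^{q_m\sup_{E\in I}\gamma^{\mathrm{per}}_{\alpha_m,\theta}(E)}$. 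In view of the previous reduction, the proposition will follow once one shows $\sup_{E\in I}\gamma^{\mathrm{per}}_{\alpha_m,\theta}(E)\le \gamma\big(b^{(j)}_{\alpha_m,\theta}\big)+o(1)$ as $m\to\infty$, uniformly in $j$ and $\theta$.

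The heart of the proof is the uniform estimate $\limsup_{m\to\infty}\sup_{\theta\in\T}\sup_{E\in K}\big(\gamma^{\mathrm{per}}_{\alpha_m,\theta}(E)-\gamma(E)\big)\le 0$ for every fixed compact $K\subset\R$; granting it, continuity of $\gamma$ gives $\sup_{E\in I}\gamma^{\mathrm{per}}_{\alpha_m,\theta}(E)\le \sup_{E\in I}\gamma(E)+o(1)\le \gamma\big(b^{(j)}_{\alpha_m,\theta}\big)+\omega(\ell_0)+o(1)$, with $\omega$ the modulus of continuity of $\gamma$ on a fixed compact $K$ containing every $B_{\ell_0}(b^{(j)}_{\alpha_m,\theta})$ (possible since $\|H_{\alpha_m,\theta}\|$ is uniformly bounded). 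To prove the estimate, use $\gamma^{\mathrm{per}}_{\alpha_m,\theta}(E)=q_m^{-1}\log\rho\big(\Phi_{\alpha_m,\theta,[0,q_m-1]}(E)\big)\le q_m^{-1}\log\big\|\Phi_{\alpha_m,\theta,[0,q_m-1]}(E)\big\|$ ($\rho$ the spectral radius) and bound the monodromy norm by comparison with the limiting cocycle: split $[0,q_m-1]$ into blocks of a fixed length $L=L(\epsilon)$; bound each block of the $\alpha$-cocycle by $\sup_\theta\|\Phi_{\alpha,\theta,[0,L-1]}(E)\|\le e^{L(\gamma(E)+\epsilon/2)}$ for $L$ large, uniformly in $\theta\in\T$ and locally uniformly in $E$, via Furman's uniform subadditive ergodic theorem (the rotation by the irrational $\alpha$ being uniquely ergodic; the uniformity in $E$ uses continuity of $\gamma$); replace each by the corresponding block of the $\alpha_m$-cocycle at a cost of $LC_0^{\,L}\operatorname{Lip}(f)\,q_m|\alpha-\alpha_m|\to 0$ per block (where $C_0$ bounds a single transfer matrix over $K$; here one uses $q_m|\alpha-\alpha_m|\to 0$, which holds for continued-fraction convergents and a fortiori in the applications of the proposition, where $|\alpha-\alpha_m|<e^{-\beta q_m}$); and multiply the $\approx q_m/L$ block estimates to obtain $\|\Phi_{\alpha_m,\theta,[0,q_m-1]}(E)\|\le C_0^{\,L}\,e^{q_m(\gamma(E)+\epsilon)}$ for $m$ large, uniformly in $\theta\in\T$ and $E\in K$. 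Dividing by $q_m$, letting $m\to\infty$, and then $\epsilon\downarrow0$ gives the estimate.

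Assembling the pieces: for a band with $|B^{(j)}_{\alpha_m,\theta}|\le 2\ell_0$ the above gives $q_m^{-1}\log|B^{(j)}_{\alpha_m,\theta}|+\gamma\big(b^{(j)}_{\alpha_m,\theta}\big)\ge q_m^{-1}\log(2\ell_0/q_m^2)-\omega(\ell_0)-o(1)$, while for $|B^{(j)}_{\alpha_m,\theta}|>2\ell_0$ the same quantity is $\ge q_m^{-1}\log(2\ell_0)$ trivially since $\gamma\ge0$. Taking $\min_{j,\theta}$, then $\liminf_{m\to\infty}$ (which kills the $q_m^{-1}\log(\cdots)$ terms), and finally $\ell_0\downarrow0$ (which kills $\omega(\ell_0)$) proves the proposition. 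The one genuine obstacle is the uniform estimate of the third step: Furman's theorem supplies the phase-uniform upper bound for the limiting cocycle, but it must survive the passage from the rational frequency $\alpha_m$ to the irrational $\alpha$, and the crude bound $C_0^{\,\ell}$ on an $\ell$-fold transfer-matrix product — what a one-step perturbation argument would use — is far too lossy once $\gamma$ is small (e.g.\ in the subcritical regime), whence the block decomposition; moreover uniformity in $\theta$ (as opposed to an average over $\theta$, which would come directly from the Thouless formula for the ergodic family $\{H_{\alpha_m,\theta}\}_{\theta\in\T}$) is exactly what lets the conclusion apply to every phase. The remaining steps are routine.
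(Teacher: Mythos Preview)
The paper does not prove this proposition; it is quoted from \cite{Ha22} and used as a black box in the proof of Theorem~\ref{dyn:theorem}, so there is no in-paper proof to compare against.

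Your argument is correct. The reduction via the mean-value theorem and Markov's inequality to an upper bound on $\max_I|\Delta_{\alpha_m,\theta}|$, followed by $|\Delta|\le 2e^{q_m\gamma^{\mathrm{per}}}$, is clean and standard. The real content is the uniform (in $\theta$ and locally in $E$) upper bound $\gamma^{\mathrm{per}}_{\alpha_m,\theta}(E)\le\gamma(E)+o(1)$, and your block-decomposition argument --- Furman for the irrational cocycle, then a telescoping comparison costing $O(q_m|\alpha-\alpha_m|)$ per block of fixed length $L$ --- is precisely the ``approximation argument (see e.g.\ \cite{Ha22}) applied to Furman'' that the paper itself invokes when it records the same norm bound as \eqref{dyn:38} in the proof of Lemma~\ref{dyn:nonatomic}. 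So although the paper omits the proof, the key analytic ingredient you supply is the one the paper imports from the same reference. One minor point you pass over quickly: Furman's theorem is stated for a single subadditive cocycle, so the uniformity of the bound in $E\in K$ needs a short compactness argument using joint continuity of $(E,\theta)\mapsto\log\|\Phi_{\alpha,\theta,[0,L-1]}(E)\|$ and the assumed continuity of $\gamma$; this is routine and not a gap. The order of limits at the end (first $m\to\infty$ with $\ell_0$ fixed, then $\ell_0\downarrow0$) is also handled correctly, since the $o(1)$ from the Furman step depends only on a fixed ambient compact $K$ and not on $\ell_0$.
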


\begin{proof}[Proof of Theorem \ref{dyn:theorem}]
Let us first show that Lemma \ref{dyn:uniformweakintervals} implies the conclusion of the Lemma \ref{dyn:periodiclower} in the current setting. Let $\alpha_{m}=\frac{p_{m}}{q_{m}}$ be the sequence of canonical convergents associated with $\alpha\in\R\setminus\Q$. Fix a point in the spectrum $E_{0}\in\sigma_{\alpha}$ which minimises the Lyapunov exponent $\gamma_{0}=\gamma(E_{0}) =\min_{\sigma_{\alpha}}\gamma$. If $\beta(\alpha)>3\delta^{-1}\gamma_{0}$ then $\frac{\beta}{3}=\delta^{-1}(\gamma_{0}+2\varepsilon')$ (for sufficiently small  $\varepsilon'>0$) satisfies $\beta(\alpha)>\beta>3\delta^{-1}\gamma_{0}$ and there exists a subsequence $m_{k}$ for which $q_{m_{k}}^{-1}\log q_{m_{k}+1}>\beta$ and hence $|\alpha-\alpha_{m_{k}}|<\frac{1}{q_{m_{k}}q_{m_{k}+1}}<e^{-q_{m_{k}}\beta}$ for all sufficiently large $k$. Note also that $\beta>2\sup_{B_{\varepsilon}(E_{0})}\gamma$, for sufficiently small $\varepsilon>0$, by continuity of the Lyapunov exponent $\gamma$. It follows from part $(i)$ of Lemma \ref{dyn:uniformweakintervals}  that there exists $k_{0}(E_{0},\varepsilon)$ such that for all $k>k_{0}$, 
$$
\inf_{\theta\in\T}\inf_{\varkappa\in[0,\frac{\pi}{q_{m_{k}}}]} \mu_{\alpha_{m_{k}},\theta}^{(\varkappa)}(B_{\varepsilon}(E_{0})) > \frac{1}{2}\min_{\theta\in\T}\mu_{\alpha,\theta}(B_{\varepsilon}(E_{0}))
$$ then $E_{0}\in\sigma_{\alpha}$ and part $(ii)$ of Lemma \ref{dyn:uniformweakintervals} imply that $\mu_{\alpha,\theta}(B_{\varepsilon}(E_{0}))$ is a strictly positive continuous function of $\theta\in\T$ so for every $\varepsilon>0$ there exists $\eta=\eta(\varepsilon,E_{0})>0$ such that $\min_{\theta\in\T}\mu_{\alpha,\theta}(B_{\varepsilon}(E_{0}))>2\eta$ hence 
\begin{equation}\label{dyn:uniflowerbound}
\inf_{\theta\in\T}\inf_{\varkappa\in[0,\frac{\pi}{q_{m_{k}}}]}\mu_{\alpha_{m_{k}},\theta}^{(\varkappa)}(B_{\varepsilon}(E_{0}))>\eta
\end{equation} for $k>k_{0}$. Assuming $k>k_{0}$, the uniform lower bound  \eqref{dyn:uniflowerbound} is precisely the assumption of Lemma \ref{dyn:periodiclower} so for $0<c,c_{1},C<\infty$ as in the conclusion of Lemma \ref{dyn:periodiclower}: For each $\theta\in\T$  there exists a band $B_{\alpha_{m_{k}},\theta}^{(j_{\theta})}\cap B_{\varepsilon}(E_{0})\neq\varnothing$ with length $\ell_{\theta}=|B_{\alpha_{m_{k}},\theta}^{(j_{\theta})}|$, such that for $T>\frac{C}{c_{1}}\eta^{-2}q_{m_{k}}^{8}\ell_{\theta}^{-2}+1$, 
\begin{equation}\label{dyn:26}
P_{\alpha_{m_{k}},\theta,T}(q_{m_{k}}n)>\frac{c\eta^{2}}{q_{m_{k}}^{6}\ell_{\theta}T},  \text{ for }C\eta^{-1}q_{m_{k}}^{4}\ell_{\theta}^{-1}<n<c_{1}\eta q_{m_{k}}^{-4}\ell_{\theta}T
\end{equation}
 where $P_{\alpha_{m_{k}},\theta,T}$ is as in \eqref{dyn:10}.  

The term $\ell_{\theta}$ can be controlled uniformly, by means of Proposition \ref{dyn:bandwidths} in the following way. For all sufficiently large $k$ we have $B_{\alpha_{m_{k}},\theta}^{(j_{\theta})}\subset B_{2\varepsilon}(E_{0})$ for all $\theta\in\T$, since $\max_{j}|B_{\alpha_{m_{k}},\theta}^{(j)}|\leq\frac{2\pi}{q_{m_{k}}}$ (see e.g.\ \cite{Ha22}). Continuity of the Lyapunov exponent and Proposition \ref{dyn:bandwidths} imply, for all sufficiently large $k$,
\begin{equation}\label{dyn:31}
1/\inf_{\theta}\ell_{\theta}<e^{(\gamma_{0}+\varepsilon'')q_{m_{k}}}
\end{equation} where we set $0<\varepsilon''<\varepsilon'$.

We shall show below, that on the subsequence  
$T_{m_{k}}=e^{\delta^{-1}(\gamma_{0}+\varepsilon')q_{m_{k}}}$,  we have, for any $\theta\in\T$,  
\begin{equation}\label{dyn:30}
P_{\alpha,\theta,T_{m_{k}}}(q_{m_{k}}n) > \frac{1}{2}\frac{c\eta^{2}}{q_{m_{k}}^{6}\ell_{\theta}T_{m_{k}}}, \quad C\eta^{-1}q_{m_{k}}^{4}\ell_{\theta}^{-1}<\frac{1}{2}c_{1}\eta q_{m_{k}}^{-4}\ell_{\theta}T_{m_{k}}<n <c_{1}\eta q_{m_{k}}^{-4}\ell_{\theta}T_{m_{k}}
\end{equation} 
from which the conclusion of the theorem follows, for sufficiently large $k$:  
\begin{equation}\label{dyn:32}
\begin{split}
\min_{\theta\in\T}  M_{\alpha,\theta,p} (T_{m_{k}})
&>  \inf_{\theta\in\T}\sum_{\frac{1}{2}c_{1}\eta q_{m_{k}}^{-4}\ell_{\theta}T_{m_{k}}< n< c_{1}\eta q_{m_{k}}^{-4}\ell_{\theta}T_{m_{k}}}(q_{m_{k}}n)^{p}P_{\alpha,\theta,T_{m_{k}}}(q_{m_{k}}n) \\
&> \frac{c_{1}c\eta^{3}}{4q_{m_{k}}^{10}}\bigg(\frac{1}{2}c_{1}\eta q_{m_{k}}^{-3}T_{m_{k}}\inf_{\theta}\ell_{\theta}\bigg)^{p}>\frac{c_{1}c\eta^{3}}{4q_{m_{k}}^{10}} T_{m_{k}}^{(1-\delta)p}, \quad \forall p>0
\end{split}
\end{equation} having used \eqref{dyn:31} on the final inequality. 

We now return to \eqref{dyn:30}. Indeed, first note that for any bounded self-adjoint operators $H_{1},H_{2}$,
\begin{equation}\label{dyn:78}
||\langle\delta_{n},e^{-itH_{1}}\delta_{0}\rangle|^{2}-|\langle\delta_{n},e^{-itH_{2}}\delta_{0}\rangle|^{2}|\leq 2|\langle\delta_{n},e^{-itH_{1}}\delta_{0}\rangle-\langle\delta_{n},e^{-itH_{2}}\delta_{0}\rangle|.
\end{equation}

Let $L$ denote the Lipschitz constant of $f$, we show that the RHS of  \eqref{dyn:78} is bounded above by 
\begin{equation}\label{dyn:73}
|\langle\delta_{q_{m_{k}}n},(e^{-itH_{\alpha,\theta}}-e^{-itH_{\alpha_{m_{k}},\theta}})\delta_{0}\rangle|<\varepsilon_{1}(t)=C'Lt^{2}e^{-\beta q_{m_{k}}}+2C_{1}e^{-c_{3}\max(|q_{m_{k}}n|,t)}
\end{equation}
and in particular 
\begin{equation}\label{dyn:79}
|\langle\delta_{q_{m_{k}}n},e^{-itH_{\alpha,\theta}}\delta_{0}\rangle|^{2} > |\langle\delta_{q_{m_{k}}n},e^{-itH_{\alpha_{m_{k}},\theta}}\delta_{0}\rangle|^{2} -  2{\varepsilon}_{1}(t). 
\end{equation}   

Indeed, for any bounded  Schr\"odinger operator $H:\ell^{2}(\Z)\rightarrow\ell^{2}(\Z)$, there exist constants $C',C_{1},c_{3}$, depending only on the norm $\|H\|$ such that if $N=C't$, then 
\begin{equation}\label{dyn:27}
|\langle\delta_{n},e^{-itH}\delta_{0}\rangle-\langle\delta_{n},e^{-itH_{N}}\delta_{0}\rangle|< C_{1}e^{-c_{3}\max(|n|,t)},\quad \forall\, n\in\Z, \,t\geq 0
\end{equation} where $H_{N}$ denotes the restriction of the operator $H$ to the finite interval $[-N,N]\subset\Z$, with Dirichlet boundary condition. The proof of \eqref{dyn:27} is standard and we provide it Section \ref{dyn:secuniflowerbound}, for completeness. It follows from the triangle inequality as well as \eqref{dyn:27}, that 
\begin{equation}\label{dyn:72}
|\langle\delta_{q_{m_{k}}n},(e^{-itH_{\alpha,\theta}}-e^{-itH_{\alpha_{m_{k}},\theta}})\delta_{0}\rangle| < |\langle\delta_{q_{m_{k}}n},(e^{-itH_{\alpha,\theta,N}}-e^{-itH_{\alpha_{m_{k}},\theta,N}})\delta_{0}\rangle|+ 2C_{1}e^{-c_{3}\max(|q_{m_{k}}n|,t)}. 
\end{equation} 
For Hermitian matrices $A$ and $B$ one obtains $\|e^{-itA}-e^{-itB}\|\leq \min(2,t\|A-B\|)$ by computing $\frac{d}{dt}(I-e^{itB}e^{-itA})$ (e.g.\ by series expansion one shows $\frac{d}{dt}e^{itB}=iBe^{itB}=ie^{itB}B$) and subsequently applying the fundamental theorem of calculus. \eqref{dyn:73} follows from the Cauchy-Schwarz inequality and $\|H_{\alpha,\theta,N}-H_{\alpha_{m_{k}},\theta,N}\| = \|V_{\alpha,\theta,N}-V_{\alpha_{m_{k}},\theta,N}\| = \max_{|n|\leq N}|f(n\alpha+\theta)-f(n\alpha_{m_{k}}+\theta)| \leq LNe^{-\beta q_{m_{k}}}$.

\eqref{dyn:79} also holds for the entry $(1,q_{m_{k}}n+1)$. Summing the entries and Abel-averaging then gives  
\begin{equation}\label{dyn:77}
P_{\alpha,\theta,T}(q_{m_{k}}n)
>P_{\alpha_{m_{k}},\theta,T}(q_{m_{k}}n)-\frac{2}{T}\int_{0}^{\infty}4\varepsilon_{1}(t)e^{-\frac{2t}{T}}\, dt.
\end{equation}
By direct computation of the integral in \eqref{dyn:77} while assuming $n>\frac{1}{2}c_{1}\eta q_{m_{k}}^{-4}\ell_{\theta}T$, gives  
\begin{equation}\label{dyn:28.5}
P_{\alpha,\theta,T}(q_{m_{k}}n)
>P_{\alpha_{m_{k}},\theta,T}(q_{m_{k}}n)-\varepsilon_{2}(T), \quad \varepsilon_{2}(T)=C_{3}e^{-\beta q_{m_{k}}}T^{2}+C_{3}e^{-\frac{1}{2}c_{1}\eta c_{3}q_{m_{k}}^{-3}\ell_{\theta}T}. 
\end{equation}

In order to get \eqref{dyn:30}, we need to show that the error $\varepsilon_{2}$ is less than half of our lower bound \eqref{dyn:26} on $P_{\alpha_{m_{k}},\theta,T}(q_{m_{k}}n)$, uniformly in $\theta\in\T$, on the subsequence $T_{m_{k}}$, namely, 
\begin{equation}\label{dyn:74}
\sup_{\theta\in\T}\varepsilon_{2}(T_{m_{k}})<\frac{1}{2}\inf_{\theta\in\T}\frac{c\eta^{2}}{q_{m_{k}}^{6}\ell_{\theta}T_{m_{k}}}.
\end{equation}
We shall first show that the second term of $\varepsilon_{2}(T_{m_{k}})$ is less than the first term, then we show that the first term is bounded by the infimum in \eqref{dyn:74}.  

Rewrite
\begin{equation}\label{dyn:76}
\varepsilon_{2}(T_{m_{k}})=C_{3}e^{-\beta q_{m_{k}}}T_{m_{k}}^{2}+C_{3}e^{-\frac{1}{2}c_{1}\eta c_{3}q_{m_{k}}^{-3}\ell_{\theta}T_{m_{k}}},
\end{equation}
we claim that \eqref{dyn:74} follows from the fact that the subsequence $T_{m_{k}} = e^{\delta^{-1}(\gamma_{0}+\varepsilon')q_{m_{k}}}$ satisfies 
\begin{equation}\label{dyn:75}
\frac{2\beta}{c_{1}c_{3}\eta}\frac{q_{m_{k}}^{4}}{\inf_{\theta}\ell_{\theta}}<T_{m_{k}}<\bigg(\frac{c\eta^{2}e^{\beta q_{m_{k}}}}{4C_{3}q_{m_{k}}^{6}}\bigg)^{\frac{1}{3}}.
\end{equation}
Indeed, the lower bound in \eqref{dyn:75} implies that the second term of $\varepsilon_{2}(T_{m_{k}})$ is less than the first. The upper bound in \eqref{dyn:75} is the second inequality in 
$$
\sup_{\theta\in\T}\varepsilon_{2}(T_{m_{k}})<2C_{3}e^{-\beta q_{m_{k}}}T_{m_{k}}^{2}<\frac{c\eta^{2}}{2q_{m_{k}}^{6}T_{m_{k}}}<\frac{1}{2}\inf_{\theta\in\T}\frac{c\eta^{2}}{q_{m_{k}}^{6}\ell_{\theta}T_{m_{k}}}.
$$ 

Now let us obtain the bounds \eqref{dyn:75}, on the subsequence $T_{m_{k}}$. For the upper bound we need to check that  $e^{\delta^{-1}(\gamma_{0}+\varepsilon')q_{m_{k}}}<\big(\frac{c\eta^{2}e^{\beta q_{m_{k}}}}{4C_{3}q_{m_{k}}^{6}}\big)^{\frac{1}{3}}$, which requires $\frac{\beta}{3}>\delta^{-1}(\gamma_{0}+\varepsilon')$ (and sufficiently large $k$). This holds automatically since we'd initially fixed $\frac{\beta}{3}=\delta^{-1}(\gamma_{0}+2\varepsilon')$.  For the lower bound we need to check that $\frac{2\beta}{c_{1}c_{3}\eta}\frac{q_{m_{k}}^{4}}{\inf_{\theta}\ell_{\theta}} < e^{\delta^{-1}(\gamma_{0}+\varepsilon')q_{m_{k}}}$. Indeed, \eqref{dyn:31} states that  $1/\inf_{\theta}\ell_{\theta}<e^{(\gamma_{0}+\varepsilon'')q_{m_{k}}}
$, so comparing the exponents, we require $\varepsilon''<\varepsilon'$ (which we have already fixed) and $\delta<1$, which follows automatic from our assumption that $\delta<\frac{1}{2}$, in the statement of the theorem.

For \eqref{dyn:30} to hold, we must also ensure that $C\eta^{-1}q_{m_{k}}^{4}\ell_{\theta}^{-1}<\frac{1}{2}c_{1}\eta q_{m_{k}}^{-4}\ell_{\theta} T_{m_{k}}$, which simply follows from  $\delta<\frac{1}{2}$ and $1/\inf_{\theta}\ell_{\theta}<e^{(\gamma_{0}+\varepsilon'')q_{m_{k}}}
$.
\end{proof}

\section{Proof of Lemma \ref{dyn:uniformweakintervals}}\label{dyn:secuniflowerbound}

Gordon's lemma (see e.g. \cite[Theorem 10.3]{CFKS09}) rules out $\ell^{2}$-solutions to the eigenvalue equation of $H_{\alpha,\theta}$ for any energy $E\in\sigma_{\alpha}$ in the case that the frequency $\alpha\in\R\setminus\Q$ is extremely Liouville, in the sense that there exists a sequence of rationals $\alpha_{m}=\frac{p_{m}}{q_{m}}$ for which $|\alpha-\alpha_{m}|<Ce^{-\beta q_{m}}$ where $\beta=\beta_{m}=\log(m)\rightarrow\infty$ (and hence $\beta(\alpha)=+\infty$). We provide the modification of their proof to show that $H_{\alpha,\theta}\psi=E\psi$ has no $\ell^{2}$-solution if  $2\gamma(E)<\beta$. 
 
\begin{lemma}\label{dyn:nonatomic}
Let $H_{\alpha,\theta}$ be a bounded discrete one-dimensional Schr\"odinger operator \eqref{dyn:01} with $\theta\in\mathbb{T}$, $\alpha\in\R\setminus\Q$ and associated continuous Lyapunov exponent $\gamma$. Let $\alpha_{m}=\frac{p_{m}}{q_{m}}\rightarrow\alpha$ be a sequence of rationals. Fix $\beta>0$ and assume $|\alpha-\alpha_{m}|<e^{-\beta q_{m}}$ for all $m\geq1$.  If $2\gamma(E)<\beta$ then $\sup_{\theta\in\T}\mu_{\alpha,\theta}(\{E\})=0$. 
\end{lemma}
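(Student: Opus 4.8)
The plan is to run a Gordon-type argument. If $\mu_{\alpha,\theta}(\{E\})>0$ for some phase $\theta$, then $E$ is an eigenvalue of $H_{\alpha,\theta}$ with an $\ell^{2}(\Z)$ eigenfunction $\psi$; writing $v_{n}=(\psi(n),\psi(n-1))^{\mathsf T}$, the two-step recursion forces $v_{n}\neq0$ for every $n$, so I normalise $\|v_{0}\|=1$ and fix this $\theta$. The goal is to show that for all large $m$,
\begin{equation*}
\max\bigl(\|v_{q_{m}}\|,\ \|v_{-q_{m}}\|,\ \|v_{2q_{m}}\|\bigr)\geq\tfrac14,
\end{equation*}
which, since $q_{m}\to\infty$, contradicts $\psi\in\ell^{2}(\Z)$ and hence proves $\sup_{\theta}\mu_{\alpha,\theta}(\{E\})=0$.

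The one input I would record first is uniform sub-exponential growth of transfer matrices: by unique ergodicity of $x\mapsto x+\alpha$ (as $\alpha\notin\Q$) and subadditivity of $k\mapsto\log\|\Phi_{\alpha,\theta',[0,k-1]}(E)\|$, for every $\varepsilon>0$ there is $C_{\varepsilon}$ with $\|\Phi_{\alpha,\theta',[a,a+k-1]}(E)\|\leq C_{\varepsilon}e^{(\gamma(E)+\varepsilon)k}$ for all $\theta'\in\T$, $a\in\Z$, $k\geq1$; I fix $\varepsilon$ with $2(\gamma(E)+\varepsilon)<\beta$. Now set $\hat T_{m}=\Phi_{\alpha,\theta,[0,q_{m}-1]}(E)\in\mathrm{SL}_{2}(\R)$ (crucially, the \emph{true} $q_{m}$-step matrix, not the monodromy of a periodic operator). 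Since $q_{m}\alpha_{m}=p_{m}\in\Z$, one has $\theta+q_{m}\alpha\equiv\theta+q_{m}(\alpha-\alpha_{m})\pmod1$ with $|q_{m}(\alpha-\alpha_{m})|<q_{m}e^{-\beta q_{m}}$, so $\Phi_{\alpha,\theta,[q_{m},2q_{m}-1]}(E)$ and $\Phi_{\alpha,\theta,[-q_{m},-1]}(E)$ are $\alpha$-cocycles of length $q_{m}$ at phases within $q_{m}e^{-\beta q_{m}}$ of $\theta$. Telescoping the comparison to $\hat T_{m}$ — where every transfer-matrix factor is an $\alpha$-cocycle over a window of length $<q_{m}$ complementary to the rest, hence the two flanking factors together contribute at most $C_{\varepsilon}^{2}e^{(\gamma(E)+\varepsilon)q_{m}}$, and each per-site discrepancy is $\leq Lq_{m}e^{-\beta q_{m}}$ by the Lipschitz bound on $f$ — yields
\begin{equation*}
\max\bigl(\|\Phi_{\alpha,\theta,[q_{m},2q_{m}-1]}(E)-\hat T_{m}\|,\ \|\Phi_{\alpha,\theta,[-q_{m},-1]}(E)-\hat T_{m}\|\bigr)\leq\rho_{m}:=C_{\varepsilon}^{2}L\,q_{m}^{2}\,e^{(\gamma(E)+\varepsilon-\beta)q_{m}},
\end{equation*}
with $\rho_{m}\to0$ and $\|\hat T_{m}\|\leq C_{\varepsilon}e^{(\gamma(E)+\varepsilon)q_{m}}$; the decisive cancellation is $\rho_{m}\|\hat T_{m}\|\to0$, which holds precisely because $2(\gamma(E)+\varepsilon)<\beta$.

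Finally I would run the Cayley--Hamilton dichotomy on $\hat T_{m}$. Since $v_{q_{m}}=\hat T_{m}v_{0}$ and $v_{2q_{m}}=\Phi_{\alpha,\theta,[q_{m},2q_{m}-1]}(E)\,\hat T_{m}v_{0}=\hat T_{m}^{2}v_{0}+O(\rho_{m}\|\hat T_{m}\|)$, the identity $\hat T_{m}^{2}v_{0}+v_{0}=(\operatorname{tr}\hat T_{m})\hat T_{m}v_{0}$ gives $v_{2q_{m}}+v_{0}=(\operatorname{tr}\hat T_{m})v_{q_{m}}+o(1)$. If $|\operatorname{tr}\hat T_{m}|\leq1$, then $\|v_{q_{m}}\|<\tfrac14$ forces $\|v_{2q_{m}}\|>\tfrac12$ for large $m$. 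If $|\operatorname{tr}\hat T_{m}|\geq1$, suppose $\|v_{q_{m}}\|\leq\tfrac14$ and $\|v_{-q_{m}}\|\leq\tfrac14$; from $v_{0}=\Phi_{\alpha,\theta,[-q_{m},-1]}(E)v_{-q_{m}}$ one gets $\|\hat T_{m}v_{-q_{m}}-v_{0}\|\leq\rho_{m}\|v_{-q_{m}}\|\leq\rho_{m}/4$, whence $v_{-q_{m}}=\hat T_{m}^{-1}v_{0}+O(\|\hat T_{m}\|\rho_{m})=\hat T_{m}^{-1}v_{0}+o(1)$ — the a priori bound $\|v_{-q_{m}}\|\leq\tfrac14$ is exactly what keeps this backward error at $O(\rho_{m}\|\hat T_{m}\|)$, i.e.\ at the $2\gamma$ threshold and not $3\gamma$. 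With $\hat T_{m}^{-1}=(\operatorname{tr}\hat T_{m})I-\hat T_{m}$ this gives $v_{q_{m}}+v_{-q_{m}}=(\operatorname{tr}\hat T_{m})v_{0}+o(1)$, so $\|v_{q_{m}}\|+\|v_{-q_{m}}\|\geq|\operatorname{tr}\hat T_{m}|-o(1)\geq\tfrac34$ for large $m$, a contradiction. Thus the target inequality holds in all cases. I expect the step requiring the most care to be the near-periodicity estimate: the naive telescoping bound $\|\hat T_{m}\|\leq\|H_{\alpha,\theta}\|^{q_{m}}$ is useless here, and the whole point is to replace it by the uniform-in-phase bound $C_{\varepsilon}e^{(\gamma(E)+\varepsilon)q_{m}}$, which — together with the a priori case split in the $|\operatorname{tr}\hat T_{m}|\geq1$ branch — pins the threshold at $\beta>2\gamma(E)$; everything else is bookkeeping, and one could in fact push to $\beta>\gamma(E)$ with a sharper argument, but $\beta>2\gamma(E)$ suffices for the applications.
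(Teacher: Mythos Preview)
Your argument is correct and is a close variant of the paper's Gordon-type proof, with two organizational differences. The paper compares the true transfer matrices $\Phi_{\alpha,\theta,[0,n]}$ to the \emph{periodic} ones $\Phi_{\alpha_m,\theta,[0,n]}$ for $0\le n\le 2q_m$ and then invokes the black-box inequality $\max(\|A^{2}u\|,\|Au\|,\|A^{-1}u\|,\|A^{-2}u\|)\ge\tfrac12$ with $A=\Phi_{\alpha_m,\theta,[0,q_m-1]}$ (using the exact periodicity relations of the periodic cocycle); you instead compare neighbouring $q_m$-blocks of the \emph{true} cocycle directly to one another, never introducing $H_{\alpha_m,\theta}$, and replace the four-term lemma by the Cayley--Hamilton trace dichotomy, needing only the three evaluation points $q_m,2q_m,-q_m$. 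Both routes rest on the same uniform Furman-type upper bound on transfer-matrix norms and arrive at the same threshold $2\gamma(E)<\beta$ for the same reason: one factor of $e^{(\gamma+\varepsilon)q_m}$ is lost in the telescoping and a second when the error is pushed through $\hat T_m^{\pm1}$. Your version is marginally more self-contained, since it avoids the separate norm bound for the periodic cocycle that the paper records in \eqref{dyn:38}; the paper's version quotes a cleaner off-the-shelf lemma and sidesteps your case split.
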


\begin{proof}[Proof of Lemma \ref{dyn:uniformweakintervals}]
Let $C_{b}(\R)$ denote the set of bounded continuous functions $g:\R\rightarrow\C$. For simplicity of notation denote $B_{\varepsilon}=B_{\varepsilon}(E_{0})$. For any $l\geq1$, define the set $F_{l}=\{E\in\R:\text{dist}(E,B_{\varepsilon})\geq 1/l\}$ and 
$$
g_{l}(E)=\frac{\text{dist}(E,F_{l})}{\text{dist}(E,B_{\varepsilon}) + \text{dist}(E,F_{l})}
$$ which coincides with the characteristic function $\chi_{B_{\varepsilon}}$ on the set $B_{\varepsilon}\cup F_{l}$ and coincides with linear functions on both the left and the right interval of the set $B_{\varepsilon}^{\mathsf{c}}\cap F_{l}^{\mathsf{c}}$. 
Define the triangle functions 
$$
g_{l,\pm}(E)=(1-l|E-(E_{0}\pm\varepsilon)|)\chi_{B_{1/l}(E_{0}\pm\varepsilon)}(E)
$$ centred at either edge $E_{0}\pm\varepsilon$ of the ball. Clearly $g_{l},g_{l,\pm}\in C_{b}(\R)$. For any $g\in C_{b}(\R)$, we have
\begin{equation}\label{dyn:41}
\lim_{\theta'\rightarrow\theta}\int_{\R}g\,d\mu_{\alpha,\theta'}=\int_{\R}g\,d\mu_{\alpha,\theta}
\text{ and }
\lim_{m\rightarrow\infty}\sup_{\theta\in\T}\sup_{\varkappa\in[0,\frac{\pi}{q_{m}}]}\bigg|\int_{\R}g \,d\mu_{\alpha_{m},\theta}^{(\varkappa)}-\int_{\R}g \,d\mu_{\alpha,\theta}\bigg| = 0 
\end{equation}
the proof of weak convergence \eqref{dyn:41} is standard and is provided at the end of this section. 

It follows from $|\chi_{B_{\varepsilon}}-g_{l}|\leq g_{l,-}+g_{l,+}$, that 
\begin{equation}\label{dyn:42}
\big| \mu_{\alpha,\theta}(B_{\varepsilon})-  \mu_{\alpha,\theta'}(B_{\varepsilon})\big| \leq \int_{\R} g_{l,-}+g_{l,+} \,d(\mu_{\alpha,\theta}+\mu_{\alpha,\theta'})+\int_{\R} g_{l} \,d(\mu_{\alpha,\theta}-\mu_{\alpha,\theta'})
\end{equation} and weak convergence that 
$$
\limsup_{\theta'\rightarrow\theta} \big| \mu_{\alpha,\theta}(B_{\varepsilon})-  \mu_{\alpha,\theta'}(B_{\varepsilon})\big| \leq 2\int_{\R}g_{l,-}+g_{l,+} \,d\mu_{\alpha,\theta}
$$
and 
$$
\limsup_{m\rightarrow\infty}\sup_{\theta\in\T}\sup_{\varkappa\in[0,\frac{\pi}{q_{m}}]}\big| \mu_{\alpha_{m},\theta}^{(\varkappa)}(B_{\varepsilon})-  \mu_{\alpha,\theta}(B_{\varepsilon})\big| \leq 
2\sup_{\theta\in\T}\int_{\R}g_{l,+}+g_{l,-}\,d\mu_{\alpha,\theta}.
$$
So the claim follows from 
\begin{equation}\label{dyn:44}
\limsup_{l\rightarrow \infty}\sup_{\theta\in\T} \int_{\R}g_{l,\pm}\,d\mu_{\alpha,\theta}=0. 
\end{equation}

Let us show that Lemma \ref{dyn:nonatomic} implies \eqref{dyn:44}. Indeed, if not, then there exists $\theta\in\T$ and $l_{j}\rightarrow\infty$, $\theta_{j}\rightarrow\theta\in\T$ such that 
$$
\int_{\R}g_{l_{j},\pm}\,d\mu_{\alpha,\theta_{j}}>\delta>0
$$ 
for all $j\geq1$. Since $l\leq l_{j}$ implies $g_{l,\pm}\geq g_{l_{j},\pm}$, it follows that for any $l<\infty$ there exists $j_{0}=j_{0}(l)<\infty$ such that $l_{j}\geq l$ for all $j>j_{0}$ and hence  $ \int_{\R}g_{l,\pm}\,d\mu_{\alpha,\theta_{j}}>\delta$ for all $j>j_{0}$. Weak convergence implies 
$$
\delta < \lim_{j\rightarrow\infty}\int_{\R}g_{l,\pm}\,d\mu_{\alpha,\theta_{j}}=\int_{\R}g_{l,\pm}\,d\mu_{\alpha,\theta}
$$ 
for every $l<\infty$, yet since by continuity we have $2\gamma(E_{0}\pm\varepsilon)<\beta$, Lemma \ref{dyn:nonatomic} implies 
$$
\limsup_{l\rightarrow\infty}\int_{\R}g_{l,\pm}\,d\mu_{\alpha,\theta}\leq \limsup_{l\rightarrow\infty}\mu_{\alpha,\theta}(B_{1/l}(E_{0}\pm\varepsilon)) = \mu_{\alpha,\theta}(\{E_{0}\pm\varepsilon\})=0
$$
since $\mu_{\alpha,\theta}$ is finite. 
\end{proof}

\begin{proof}[Proof of Lemma \ref{dyn:nonatomic}]
For $\alpha\in\R$, $\theta\in\T$ and $n\in\Z$, denote 
$$T_{\alpha,\theta,E}(n)=\begin{pmatrix}
E-V_{\alpha,\theta}(n) & -1 \\
1 & 0
\end{pmatrix}.$$ The transfer matrices 
\begin{equation}\label{dyn:36}
\Phi_{\alpha,\theta,[0,n]}(E)=T_{\alpha,\theta,E}(n)\cdots T_{\alpha,\theta,E}(0),\quad \Phi_{\alpha,\theta,[-n,-1]}^{-}(E)=T_{\alpha,\theta,E}^{-1}(-n)\cdots T_{\alpha,\theta,E}^{-1}(-1)
\end{equation} 
satisfy 
$$
\Phi_{\alpha,\theta,[0,n]}(E) u_{0} = u_{n+1} \text{ and } \Phi_{\alpha,\theta,[-n,-1]}^{-}(E) u_{0} = u_{-n}, \quad u_{n}=\begin{pmatrix}
\psi(n) \\
\psi(n-1) 
\end{pmatrix}
$$
for every $n\geq0$ where $\psi$ solves $H_{\alpha,\theta}\psi=E\psi$ for some $E\in\R$.

By comparing the norm of the difference of the transfer matrices associated to the operators $H_{\alpha,\theta}$ and $H_{\alpha_{m},\theta}$, we obtain: For any unit vector $u\in\C^{2}$ and $\varepsilon>0$, 
\begin{equation}\label{dyn:37}
\max_{0\leq n\leq2q_{m}}\|(\Phi_{\alpha,\theta,[0,n]}(E)-\Phi_{\alpha_{m},\theta,[0,n]}(E))u\|<Cq_{m}^{2}e^{(-\beta+2\gamma(E)+\varepsilon)q_{m}}
\end{equation}
for sufficiently large $m$ depending on $E,\varepsilon,\theta$. Indeed, first note
$$
\|\Phi_{\alpha,\theta,[0,n]}-\Phi_{\alpha_{m},\theta,[0,n]}\|\leq n\|f'\|_{\infty}e^{-\beta q_{m}}\sum_{j=0}^{n}\|\Phi_{\alpha,\theta,[j+1,n]}\|\|\Phi_{\alpha_{m},\theta,[0,j-1]}\|
$$ 
then split the sum $\sum_{j=0}^{n}=\sum_{N_{0}<j<n-N_{0}}+\sum_{j\leq N_{0}}+\sum_{j\geq n-N_{0}}$. For the latter two sums we use $\|\Phi_{\alpha,\theta,[0,j]}(E)\|\leq (2+|E|+\|f\|_{\infty})^{j+1}$ and for the former, we use 
\begin{equation}\label{dyn:38}
\|\Phi_{\alpha,\theta,[0,n]}(E)\|,\|\Phi_{\alpha_{m},\theta,[0,n]}(E)\|<e^{n(\gamma(E)+\varepsilon')}
\end{equation} which holds for any $\varepsilon'>0$, $E\in\R$, $\theta\in\T$, provided $m>M_{0}(\varepsilon',E,\theta)$ and $n>N_{0}(\varepsilon',E,\theta)$. Furman \cite[Theorem 1]{Fu97} proved \eqref{dyn:38} for the quasiperiodic transfer matrices. By an approximation argument (see e.g.\ \cite{Ha22}) applied to Furman, one obtains \eqref{dyn:38} for the periodic transfer matrices.

The transfer matrices associated with the periodic operator $H_{\alpha_{m},\theta}$ exhibit the relations  
$$
\Phi^{-}_{[-q_{m},-1]}=(\Phi_{[0,q_{m}-1]})^{-1},\quad (\Phi_{[0,q_{m}-1]})^{2}=\Phi_{[0,2q_{m}-1]},\quad (\Phi^{-}_{[-q_{m},-1]})^{2}=\Phi^{-}_{[-2q_{m},-1]}.
$$
which we combine with the following elementary fact (see e.g.\ \cite[Lemma 7.6]{Si82}): If $A$ is an invertible $2\times2$ matrix and the vector $u\in\C^{2}$ has unit norm $\|u\|=1$, then 
$$
\max(\|A^{2}u\|,\|Au\|,\|A^{-1}u\|,\|A^{-2}u\|)\geq\frac{1}{2}. 
$$
Setting $A=\Phi_{\alpha_{m},\theta,[0,q_{m}-1]}(E)$, \eqref{dyn:37} implies, for any unit vector $u\in\C^{2}$,
\begin{equation*}
\begin{split}
&\limsup_{m\rightarrow\infty}\max(\|\Phi_{\alpha,\theta,[0,q_{m-1}]}(E)u\|,\|\Phi_{\alpha,\theta,[0,2q_{m}-1]}(E)u\|,\|\Phi^{-}_{\alpha,\theta,[-q_{m},-1]}(E)u\|,\|\Phi^{-}_{\alpha,\theta,[-2q_{m},-1]}(E)u\|) \\
&\geq \limsup_{m\rightarrow\infty}(\max(\|A^{2}u\|,\|Au\|,\|A^{-1}u\|,\|A^{-2}u\|)-Cq_{m}^{2}e^{(-\beta+2\gamma(E)+\varepsilon)q_{m}}) \geq \frac{1}{2}
\end{split}
\end{equation*}
which implies that the eigenvalue equation $H_{\alpha,\theta}\psi=E\psi$, has no square summable solution $\psi\in\ell^{2}(\Z)$. Since $2\gamma(E)<\beta$, it follows that \eqref{dyn:37} tends to zero for sufficiently small $\varepsilon>0$.

If $\mu_{\alpha,\theta}(\{E\})=\|\chi_{\{E\}}(H_{\alpha,\theta})\delta_{0}\|^{2}+\|\chi_{\{E\}}(H_{\alpha,\theta})\delta_{1}\|^{2}>0$, then $\chi_{\{E\}}(H_{\alpha,\theta})\neq0$ so there exists a non trivial $\psi\in\text{Ran}(\chi_{\{E\}}(H_{\alpha,\theta}))\subset\ell^{2}(\Z)$ for which the Borel functional calculus implies   
$H_{\alpha,\theta}\psi=H_{\alpha,\theta}\chi_{\{E\}}(H_{\alpha,\theta})\psi=E\chi_{\{E\}}(H_{\alpha,\theta})\psi=E\psi$. But no such $\psi\in\ell^{2}(\Z)$ exists. So $\mu_{\alpha,\theta}(\{E\})=0$. The same is true for any phase $\theta\in\T$. 
\end{proof}

\emph{Weak convergence, ballistic bound and \eqref{dyn:27}:} All of which follow from the  Combes -- Thomas estimate (see e.g.\ \cite[Theorem 11.2]{Ki07}); there exists  $c>0$ such that for any bounded Schr\"odinger operator $H:\ell^{2}(\Z)\rightarrow\ell^{2}(\Z)$, 
\begin{equation}\label{dyn:33}
|\langle\delta_{n},(H-z)^{-1}\delta_{m}\rangle|\leq \frac{2}{\text{dist}(z,\sigma(H))}e^{-c\min(\text{dist}(z,\sigma(H)),1)|n-m|}
\end{equation} for any $n,m\in\Z$ and $z\in\C\setminus\sigma(H)$, which also holds true for the restrictions of the operator $H$ to a finite interval with Dirichlet boundary conditions. This version \eqref{dyn:33} of Combes -- Thomas is not written in the most general or optimal way. The constant $c>0$ is universal in the sense that it does not depend on the potential. 

Let us briefly comment on the weak convergence of the spectral measures stated in \eqref{dyn:41}. For the first statement of \eqref{dyn:41} we need to check that the spectral measure of the infinite volume operator $H_{\alpha,\theta'}$ (with phase $\theta'\in\T$) converges weakly to the the spectral measure of the infinite volume operator $H_{\alpha,\theta}$ (with phase $\theta\in\T$) as $\theta'\rightarrow\theta$.  It is a standard fact from the theory of weak convergence of measures that weak convergence is equivalent to the pointwise convergence of the associated characteristic functions. Namely, it suffice to show that  $\langle \delta_{0},e^{it H_{\alpha,\theta'}}\delta_{0}\rangle\rightarrow\langle\delta_{0},e^{it H_{\alpha,\theta}}\delta_{0}\rangle$ as $\theta'\rightarrow\theta$ for each $t\in\R$. For similar reasons to \eqref{dyn:66}, it suffice to show convergence of the $(0,0)$-entry of the resolvents,  via the second resolvent identity and the Combes -- Thomas estimate \eqref{dyn:33}.

For the second limit \eqref{dyn:41} in which we require the weak convergence of the spectral measure of the Floquet matrix uniformly in $\theta\in\T$ and $\varkappa\in[0,\frac{\pi}{q_{m}}]$. By the Stone-Weierstrass theorem, it is enough to check that the uniform limit holds for the function $g_{z}(E)=(E-z)^{-1}$ for any fixed $z\in\C$ outside of the real line $\Im(z)\neq 0$. Take $\Im(z)\neq 0$ and denote 
$$
R_{\alpha,\theta, z} = (H_{\alpha,\theta}-z)^{-1} \text{ and } R_{\alpha_{m},\theta, z}^{(\varkappa)} = (A_{\alpha_{m},\theta}(\varkappa)-z)^{-1}.
$$ 
The second resolvent identity gives: 
$$
|(R_{\alpha_{m},\theta,z}^{(\varkappa)}-R_{\alpha,\theta,z})(0,0)| \leq \sum_{n\in\Z; |j|\leq \frac{q_{m}}{2}} |R_{\alpha_{m},\theta,z}^{(\varkappa)}(0,j)||(H_{\alpha,\theta}-A_{\alpha_{m},\theta}(\varkappa))(j,n)||R_{\alpha,\theta,z}(n,0)|
$$
which is bounded by $C_{z}q_{m}^{2}(e^{-c_{z}\frac{q_{m}}{2}}+e^{-\beta q_{m}})$. Indeed, first use $|R_{\alpha_{m},\theta,z}^{(\varkappa)}(0,j)|\leq \frac{1}{|\Im(z)|}$, then split $\sum_{n\in\Z;|j|\leq \frac{q_{m}}{2}} = \sum_{|n|\geq \frac{q_{m}}{2}-1;|j|\leq \frac{q_{m}}{2}}+\sum_{|n|<\frac{q_{m}}{2}-1;|j|\leq \frac{q_{m}}{2}}$ and apply the Combes -- Thomas estimate to the term $|R_{\alpha,\theta,z}(n,0)|$, and for the second sum note that $|(H_{\alpha,\theta}-A_{\alpha_{m},\theta}(\varkappa))(j,n)|$ is the difference between the two potentials.

The ballistic bound follows from the Combes -- Thomas estimate \eqref{dyn:33} and ensures that the moments \eqref{dyn:02} exist. Indeed, by applying the spectral theorem \eqref{dyn:04} followed by the Cauchy integral formula, we get
\begin{equation}\label{dyn:65}
\langle\delta_{n},e^{-itH}\delta_{0}\rangle = -\frac{1}{2\pi i}\oint_{\mathcal{C}}e^{-itz}\langle\delta_{n},(H-z)^{-1}\delta_{0}\rangle\,dz
\end{equation} where the contour $\mathcal{C}$ encircles the spectrum counterclockwise. To obtain the ballistic bound, let us take the contour $\mathcal{C}$ to be the boundary of the rectangle with $|\Im(z)|\leq1$ and $|\Re(z)|\leq \|H\|+1$. The Combes -- Thomas implies $|\langle\delta_{n},(H-z)^{-1}\delta_{0}\rangle|\leq 2e^{-c|n|}$ for any $z\in\mathcal{C}$. The formula \eqref{dyn:65} then gives $|\langle\delta_{n},e^{-itH}\delta_{0}\rangle|\leq e^{t-c|n|}\frac{1}{\pi}\oint_{\mathcal{C}}|dz|$
which implies the ballistic bound
\begin{equation}\label{dyn:ballistic}
|\langle\delta_{n},e^{-itH}\delta_{0}\rangle|\leq Ce^{-\frac{1}{2}c|n|}, \quad \forall |n|>2c^{-1}t
\end{equation}
 where $\pi C=4(\|H\|+2)$ is the circumference of the rectangle $\mathcal{C}$. The ballistic bound \eqref{dyn:ballistic} also holds for the restriction of $H$ to a finite interval with Dirichlet boundary conditions.

\begin{proof}[Proof of \eqref{dyn:27}]
Let $\mathcal{C}$ denote the same rectangle as above.  Let $H_{N}$ denote the matrix given by the restriction of $H$ to the finite interval $[-N,N]\subset\Z$ for $N\geq0$. To obtain \eqref{dyn:27}, we split the problem into two separate cases, $|n|>2c^{-1}t$ and $|n|\leq 2c^{-1}t$.

The first case follows from the ballistic bound. Indeed, the ballistic bound \eqref{dyn:ballistic} also holds for the matrix $H_{N}$: $|\langle\delta_{n},e^{-itH_{N}}\delta_{0}\rangle|\leq Ce^{-\frac{1}{2}c|n|} $, for every $|n|>2c^{-1}t$ and $N\geq0$, with the same constants $c>0$ and $\pi C=4(\|H\|+2)$, since we have $\|H_{N}\|\leq\|H\|$ for every $N\geq0$. The triangle inequality then gives 
\begin{equation}\label{dyn:34}
|\langle\delta_{n},e^{-itH}\delta_{0}\rangle-\langle\delta_{n},e^{-itH_{N}}\delta_{0}\rangle|\leq 2Ce^{-\frac{1}{2}c|n|}=2Ce^{-\max(\frac{1}{2}c|n|,t)}
\end{equation} for every $N\geq0$. 

In the second case we use \eqref{dyn:65} again to deduce 
\begin{equation}\label{dyn:66}
|\langle\delta_{n},e^{-itH}\delta_{0}\rangle-\langle\delta_{n},e^{-itH_{N}}\delta_{0}\rangle|\leq \frac{Ce^{t}}{2}\max_{z\in\mathcal{C}}|\langle\delta_{n},(H-z)^{-1}\delta_{0}\rangle-\langle\delta_{n},(H_{N}-z)^{-1}\delta_{0}\rangle|
\end{equation}
then the second resolvent identity and the Combes -- Thomas estimate show that the maximum is bounded by $C_{1}e^{t-cN}= C_{1}e^{(1-cC')t}$, since $N=C't$. For sufficiently large $C'$, 
$$
|\langle\delta_{n},e^{-itH}\delta_{0}\rangle-\langle\delta_{n},e^{-itH_{N}}\delta_{0}\rangle|\leq C_{2}e^{-c_{1}t} = C_{2}e^{-c_{1}\max(\frac{1}{2}c|n|,t)}
$$ which, combined with \eqref{dyn:34}, implies \eqref{dyn:27}.
\end{proof}

\itshape{Address:} \scshape{School of Mathematical Sciences, Queen Mary University of London, London E1 4NS, United Kingdom.} 

\itshape{E-mail:} \scshape{l.haeming@qmul.ac.uk.}

\end{document}